\def\blfootnote{\xdef\@thefnmark{}\@footnotetext}
\newtheorem{theorem}{Theorem}[section]
\newtheorem{lemma}[theorem]{Lemma}
\newtheorem{prop}[theorem]{Proposition}
\newtheorem{corollary}[theorem]{Corollary}
\theoremstyle{definition}
\newtheorem{remark}[theorem]{Remark}
\newtheorem*{definition*}{Definition}
\newcommand{\ed}{\end{document}}
\newcommand{\F}{\mathbb{F}}
\newcommand{\f}{\varphi}
\renewcommand{\geq}{\geqslant}
\renewcommand{\leq}{\leqslant}
\let\leq=\leqslant
\let\geq=\geqslant
\numberwithin{equation}{section}
\begin{document}
\title{Finite groups with a soluble group\\ of coprime automorphisms whose fixed points\\ have bounded Engel sinks}

\author{E. I. Khukhro}
\address{Charlotte Scott Research Centre for Algebra, University of Lincoln, U.K., and \newline \indent  Sobolev Institute of Mathematics, Novosibirsk, 630090, Russia}
\email{khukhro@yahoo.co.uk}

\author{P. Shumyatsky}

\address{Department of Mathematics, University of Brasilia, DF~70910-900, Brazil}
\email{pavel@unb.br}

\keywords{Finite groups; Engel condition; Fitting subgroup; automorphism}
\subjclass[2010]{20D45, 20D25, 20F45}

\begin{abstract}
Suppose that a finite group $G$ admits a soluble group of coprime automorphisms $A$. We prove that if, for some positive integer $m$, every element of the centralizer $C_G(A )$ has a left Engel sink of cardinality at most $m$ (or a right Engel sink of cardinality at most $m$), then $G$ has a subgroup of $(|A|,m)$-bounded index which has Fitting height at most $2\alpha (A)+2$, where $\alpha (A)$ is the composition length of $A$. We also prove that if, for some positive integer $r$, every element of the centralizer $C_G(A )$ has a left Engel sink of rank at most $r$ (or a right Engel sink of rank at most $r$), then $G$ has a subgroup of $(|A|,r)$-bounded index which has Fitting height at most $4^{\alpha (A)}+4\alpha (A)+3$. Here, a left Engel sink of an element $g$ of a group $G$ is a set ${\mathscr E}(g)$ such that for every $x\in G$ all sufficiently long commutators $[...[[x,g],g],\dots ,g]$ belong to ${\mathscr E}(g)$.  (Thus, $g$ is a left Engel element precisely when we can choose ${\mathscr E}(g)=\{ 1\}$.) A right Engel sink of an element $g$ of a group $G$ is a set ${\mathscr R}(g)$ such that for every $x\in G$ all sufficiently long commutators $[...[[g,x],x],\dots ,x]$ belong to ${\mathscr R}(g)$.  (Thus, $g$ is a right Engel element precisely when we can choose ${\mathscr R}(g)=\{ 1\}$.)

\end{abstract}

\dedicatory{To Victor Danilovich Mazurov on his 80th birthday}
\maketitle

\section{Introduction}
\baselineskip=16pt

Studying finite groups in relation to fixed points of their automorphisms is one of the mainstays of research in group theory contributing to  structural results and driving development of new methods. One of the best-known examples is J.~Thompson's theorem~\cite{tho} on the nilpotency of a finite group with a fixed-point-free automorphism of prime order. Using the classification of finite simple groups, P.~Rowley~\cite{row} showed that a finite group with a fixed-point-free automorphism of any order is soluble.

Studying groups with `almost fixed-point-free' automorphisms refers to a more general situation, when a group of automorphisms  $A\leq \operatorname{Aut}G$ may have non-trivial fixed-point subgroup $C_G(A)$, which is assumed to be `small' in one way or another. The usual goal is to prove that the group is `almost' as nice  as in the fixed-point-free case, where `almost' means modulo `pieces' commensurable in one way or another with  the fixed-point subgroup $C_G(A)$.  J.~Thompson~\cite{tho64} proved that the Fitting height of a finite soluble group $G$ with a soluble group of automorphisms $A$ of coprime order is bounded in terms of the Fitting height of $C_G(A)$ and the composition length $\alpha(A)$ of~$A$. (Recall that the Fitting height is the length of a shortest normal series with nilpotent factors.)   Thompson's paper inspired numerous subsequent papers dedicated to similar problems. Some of the best results in this area were obtained by A.~Turull; see his survey~\cite{tur}. Using Turull's results B.~Hartley and I.~M.~Isaacs~\cite{ha-is} proved that if a finite soluble group $G$ admits a soluble  group of automorphisms $A$ of coprime order, then the index $|G:F_{2\alpha (A)+1}(G)|$ is bounded in terms of $|C_G(A)|$ and $|A|$, where $F_{2\alpha (A)+1}(G)$ is the corresponding term of the Fitting series.

It is worth mentioning that when a group of automorphisms $A\leq \operatorname{Aut}G$ is not nilpotent, the coprimeness condition is essential for obtaining bounds of the Fitting height  even in the fixed-point-free case  $C_G(A)=1$, as shown by examples of S.~D.~Bell and B.~Hartley~\cite{be-ha}. But for a nilpotent group of automorphisms $A\leq \operatorname{Aut}G$ of a finite soluble group $G$ such that $C_G(A)=1$ the Fitting height of $G$ is bounded in terms of $\alpha(A)$ even without the coprimeness condition; this is a special case of a theorem of E.~C.~Dade~\cite{dade} who proved that the Fitting height of a finite soluble group $G$ is bounded in terms of $\alpha(H)$, where $H$ is a Carter subgroup of $G$.

While the aforementioned results dealt with restrictions on the order of $C_G(A)$ or its Fitting height, other `smallness' conditions on $C_G(A)$ had been successfully used.   E.~I.~Khukhro and V.~D.~Mazurov \cite{khu-maz06,khu-maz07,khu-maz05,khu-maz06semr} considered finite groups $G$ (not assumed to be soluble) having a group of automorphisms~$A$ with   fixed-point subgroup  $C_G(A)$ of given rank $r$. Here the rank of a group $H$ is the minimum positive integer $r$ such that every subgroup of $H$ can be generated by $r$ elements. One of the results in \cite{khu-maz07,khu-maz06semr}, when $A$ is soluble and has coprime order, is a bound for the rank of $G/ F_{4^{\alpha(A)}-1}$ in terms of $r$ and $|A|$.

In the present paper  we consider finite groups $G$ with a soluble group of automorphisms $A$ of coprime order such that all elements of $C_G(A)$ satisfy Engel-type conditions, which are expressed in terms of so-called Engel sinks (see the definition below). Given the maximum cardinality $m$ of these sinks, we prove that  the index $|G:F_{2\alpha (A)+2}(G)|$ is bounded in terms of $m$ and $|A|$  (Theorem~\ref{t-o}).  Given the maximum rank  $r$ of these sinks, we prove that the rank of  $G/F_{4^{\alpha (A)}+4\alpha (A)+3}(G)$ is bounded in terms of $r$ and $|A|$  (Corollary~\ref{c-1}), and if $G$ is in addition soluble, then the index   $|G:F_{4^{\alpha (A)}+4\alpha (A)+3}(G)|$ is bounded in terms of $r$ and $|A|$  (Theorem~\ref{t-r2}). The results obtained are in line with the studies of `almost fixed-point-free' automorphisms: if all elements of $C_G(A)$ are left or right Engel (that is, their left or right Engel sinks are trivial), then  $C_G(A)\leq F(G)$ by R.~Baer's theorem  \cite[12.3.7]{rob}, so that $A$ induces a fixed-point-free group of automorphism on $G/F(G)$, and therefore $G$ is soluble by a theorem of Y. M. Wang and Z. M. Chen \cite{wan-che} based on the classification of finite simple groups, and the Fitting height of $G$ is bounded in terms of $\alpha(A)$ by J.~Thompson's theorem~\cite{tho64} (with the improved bound $2\alpha(A)$ of A.~Turull~\cite{tur84}).

In order to state the precise results, we need the corresponding definitions.  We use the left-normed simple commutator notation
$[a_1,a_2,a_3,\dots ,a_r]:=[...[[a_1,a_2],a_3],\dots ,a_r]$
and the abbreviation $[a,\,{}_kb]:=[a,b,b,\dots, b]$ where $b$ is repeated $k$ times.

\begin{definition*} \label{dl}
 A \textit{left Engel sink} of an element $g$ of a group $G$ is a set ${\mathscr E}(g)$ such that for every $x\in G$ all sufficiently long commutators $[x,g,g,\dots ,g]$ belong to ${\mathscr E}(g)$, that is, for every $x\in G$ there is a positive integer $l(x,g)$ such that
 $[x,\,{}_{l}g]\in {\mathscr E}(g)$ for all $l\geq l(x,g).
 $
 \end{definition*}
 \noindent (Thus, $g$ is a left Engel element precisely when we can choose ${\mathscr E}(g)=\{ 1\}$, and $G$ is an Engel group when we can choose ${\mathscr E}(g)=\{ 1\}$ for all $g\in G$.)

 \begin{definition*} \label{dr}
 A \textit{right Engel sink} of an element $g$ of a group $G$ is a set ${\mathscr R}(g)$ such that for every $x\in G$ all sufficiently long commutators $[g,x,x,\dots ,x]$ belong to ${\mathscr R}(g)$, that is, for every $x\in G$ there is a positive integer $r(x,g)$ such that
 $[x,\,{}_{r}g]\in {\mathscr R}(g)$ for all $r\geq r(x,g).
 $
 \end{definition*}
 \noindent (Thus, $g$ is a right Engel element precisely when we can choose ${\mathscr R}(g)=\{ 1\}$, and $G$ is an Engel group when we can choose ${\mathscr R}(g)=\{ 1\}$ for all $g\in G$.)

 When $G$ is a finite group, every element   has the \emph{smallest} left Engel sink, since the intersection of two left Engel sinks ${\mathscr E}'(g)$ and ${\mathscr E}''(g)$ is again a left Engel sink of $g$. Similarly,
every element  $g\in G$  has the \emph{smallest} right Engel sink. In this paper we shall always use the notation ${\mathscr E}(g)$ and ${\mathscr R}(g)$  to denote the smallest left and right Engel sinks of $g$, respectively, thus eliminating the ambiguity of this notation in the above definitions. In cases where we need to consider the left or right Engel sink constructed with respect to a subgroup $H$ containing~$g$, we write ${\mathscr E}_H(g)$ or ${\mathscr R}_H(g)$, respectively.

 \begin{definition*} \label{drank}
 We say for short that an element $g$ of a group $G$ has a \emph{left (or right) Engel sink of rank at most $r$} if there is a subgroup of rank at most $r$ that is a left (respectively, right) Engel sink of~$g$. This is equivalent to the subgroup $\langle {\mathscr E}(g)\rangle$ (respectively,  $\langle {\mathscr R}(g)\rangle$) having rank at most $r$.
 \end{definition*}

 Recall that the Fitting series starts with the Fitting subgroup $F_1(G)=F(G)$, which is the largest normal nilpotent subgroup, and then by induction, $F_{k+1}(G)$ is the inverse image of $F(G/F_k(G))$. The Fitting height  of a finite soluble group $G$ is the least $h$ such that $F_h(G)=G$. Throughout the paper we write, say, ``$(a,b,\dots )$-bounded'' to abbreviate ``bounded above in terms of $a, b,\dots $ only''. We can now state the main results of the paper, the first of which relates to the cardinalities of Engel sinks.

\begin{theorem}\label{t-o}
Let $G$ be a finite group admitting a soluble group of automorphisms $A$ of order coprime to $|G|$. Let $m$ be a positive integer such that either
\begin{itemize}
  \item[\rm (a)] every element $g\in C_G(A)$ has a left Engel sink ${\mathscr E}_G(g)$  of cardinality at most $m$, or
  \item[\rm (b)] every element $g\in C_G(A)$ has a right Engel sink ${\mathscr R}_G(g)$ of cardinality at most $m$.
\end{itemize}
Then the index of the soluble radical $|G:S(G)|$ is bounded in terms of $m$, and the index  $|G:F_{2\alpha (A)+2}(G)|$ is bounded in terms of $|A|$ and $m$, where $\alpha (A)$ is the composition length of $A$.
\end{theorem}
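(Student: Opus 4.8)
The plan is to separate the two assertions---bounding the index of the soluble radical $|G:S(G)|$, and then bounding the Fitting height within a soluble part---and to treat case (b) by reduction to case (a). Throughout I would use the standard coprime fact that for any $A$-invariant normal subgroup $N$ one has $C_{G/N}(A)=C_G(A)N/N$, together with the observation that the image in $G/N$ of a left Engel sink of $g$ is a left Engel sink of $gN$ of no larger cardinality. Hence hypothesis (a) is inherited by every $A$-invariant section of $G$, which is what makes an inductive treatment possible.

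First I would bound $|G:S(G)|$. Passing to $\bar G=G/S(G)$, which has trivial soluble radical and inherits (a), it suffices to bound $|\bar G|$ in terms of $m$. Since $A$ is soluble and coprime and $\bar G$ is non-soluble, $C_{\bar G}(A)\neq 1$: otherwise $A$ would act fixed-point-freely on a non-soluble group, contradicting the theorem of Wang and Chen \cite{wan-che}. Taking $1\neq g\in C_{\operatorname{Soc}(\bar G)}(A)$ (nonzero by the same coprime argument applied to the socle), $g$ projects nontrivially onto some simple direct factor $S_j$ of $\operatorname{Soc}(\bar G)$, and the projection of ${\mathscr E}(g)$ is a left Engel sink of $g_j\neq 1$ in $S_j$ of cardinality at most $m$. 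The main input here is the classification-based fact that a finite nonabelian simple group possessing a nontrivial element with a left Engel sink of cardinality at most $m$ has $m$-bounded order; this bounds each composition factor of $\bar G$, and a counting argument over the $A$- and $\bar G$-orbits of factors (again using that fixed points survive under coprime action) bounds their number, giving $|\bar G|$ $m$-bounded.

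For the Fitting height I would replace $G$ by its soluble radical $S=S(G)$, which is $A$-invariant, carries the coprime $A$-action, and for which every $g\in C_S(A)$ still has a left Engel sink ${\mathscr E}_S(g)\subseteq {\mathscr E}_G(g)\cap S$ of cardinality at most $m$; since $|G:S|$ is already bounded and $F_k(S)\le F_k(G)$, it is enough to bound $|S:F_{2\alpha(A)+2}(S)|$. The strategy is to show that $A$ acts \emph{almost} fixed-point-freely on $\bar S=S/F(S)$, namely that $|C_{\bar S}(A)|=|C_S(A)F(S)/F(S)|$ is $(m,|A|)$-bounded, and then to invoke the theorem of Hartley and Isaacs \cite{ha-is}: applied to the soluble group $\bar S$ with the coprime action of $A$ it yields that $|\bar S:F_{2\alpha(A)+1}(\bar S)|$ is bounded in terms of $|C_{\bar S}(A)|$ and $|A|$. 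Since $F_{2\alpha(A)+1}(\bar S)=F_{2\alpha(A)+2}(S)/F(S)$, this is exactly the required bound on $|S:F_{2\alpha(A)+2}(S)|$.

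The heart of the proof, and the step I expect to be the main obstacle, is establishing that $C_S(A)$ has bounded image in $S/F(S)$. This is a quantitative form of R.~Baer's theorem \cite{rob}: in the trivial-sink case each $g\in C_S(A)$ is a genuine left Engel element of $S$, hence lies in $F(S)$, so $C_{\bar S}(A)=1$. For sinks of cardinality at most $m$ I would argue on the chief factors of $S$: using $F(S)=\bigcap_V C_S(V)$ (intersection over a chief series) reduces the task to bounding the action of $C_S(A)$ on each chief factor $V$. The sink condition forces, for each $g\in C_S(A)$, the ``non-unipotent part'' $[V,{}_{\infty}g]$ of $g$ on $V$ to have order at most $m$, so $g$ acts unipotently on $V$ modulo a subgroup of bounded order; the coprimeness of $A$ then lets me split $V=C_V(A)\oplus [V,A]$ compatibly with the $C_S(A)$-action, and the rigidity of coprime (in particular $p'$-)action on modules of characteristic $p$---where unipotent forces trivial---upgrades ``unipotent modulo bounded'' to ``trivial modulo bounded''. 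The delicate point to be handled carefully is the contribution of $p$-elements of $C_S(A)$ to same-characteristic chief factors, on which the sink condition gives no automatic rigidity; this is where working modulo $F(S)$ and exploiting the coprime $A$-action (to reduce to prime-order elements coprime to $p$ and to control the relevant Sylow pieces) is essential, and it is ultimately responsible for the loss of one Fitting level and for the conclusion being a bound on the index rather than an exact containment. Finally, case (b) is reduced to case (a) through the correspondence between bounded right and left Engel sinks (a Heineken-type inversion $g\mapsto g^{-1}$), so that the same argument applies.
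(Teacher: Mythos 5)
Your proposal correctly identifies the overall skeleton (bound $|G:S(G)|$, then bound $|C_{G/F(G)}(A)|$, then finish with Hartley--Isaacs), but it has a genuine gap exactly at what you yourself call the heart of the proof: bounding the image of $C_S(A)$ in $S/F(S)$. Your chief-factor scheme does not close up. The ``delicate point'' you flag --- elements of $C_S(A)$ whose order is divisible by $p$ acting on chief factors of characteristic $p$, where the sink condition gives no rigidity --- is precisely where the direct approach fails, and asserting that working modulo $F(S)$ and exploiting the coprime $A$-action ``is essential'' is not an argument: a priori $S/F(S)$ can itself have arbitrarily large Fitting height, so you face unboundedly many chief factors and meet the same same-characteristic obstruction at every level. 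The paper removes this obstruction by a route your proposal never mentions: (i) $C_G(A)$ itself inherits the sink hypothesis (sinks computed inside a subgroup are contained in sinks computed in $G$), so Theorem~\ref{t-f} (resp.\ Theorem~\ref{t-fr}) applies to $C_G(A)$ and gives it a normal subgroup of $m$-bounded order with nilpotent quotient, whence $C_G(A)$ has $m$-bounded Fitting height; (ii) Thompson's theorem \cite{tho64} (or Turull \cite{tur84}) then bounds the Fitting height of $G$ in terms of $\alpha(A)$ and $m$; (iii) only now, with boundedly many Fitting levels, one bounds $|C_{F_{i+2}(G)/F_{i+1}(G)}(A)|$ for each $i$ by Proposition~\ref{pr-h2}, whose proof works precisely because the section is metanilpotent: a $q$-element $\hat g$ of the centralizer lying outside $F=F(G)$ must induce a nontrivial \emph{coprime} automorphism of the Hall $q'$-subgroup $F_{q'}$ of $F$ (if it centralized $F_{q'}$ it would centralize a whole chief series --- the action on chief $q$-factors inside $F$ factors through the nilpotent quotient $G/F$ --- and so lie in $F$), after which Lemma~\ref{l-sink} puts $[F_{q'},\hat g]$ inside both sinks and Lemma~\ref{l33} bounds the Sylow $q$-subgroups of $C_{G/F}(A)$. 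Multiplying these bounds over the boundedly many levels gives the required bound on $|C_{G/F(G)}(A)|$, and only then does the final Hartley--Isaacs step \cite{ha-is} (which you set up correctly) go through. Without steps (i)--(ii), i.e.\ without first bounding the Fitting height of $G$, your argument cannot be completed by the means you describe.

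A second, independent gap is your disposal of case (b). Reducing right sinks to left sinks by a ``Heineken-type inversion $g\mapsto g^{-1}$'' is exactly what the paper warns is unavailable: the inverse of a right Engel \emph{element} is left Engel, but there is no analogous passage from a right Engel sink of cardinality at most $m$ to a left Engel sink of bounded cardinality, and parts (b) of the theorems are explicitly not consequences of parts (a). The correct treatment runs the two cases in parallel: the containment $[V,\hat g]\subseteq {\mathscr E}(\hat g)\cap {\mathscr R}(\hat g)$ from Lemma~\ref{l-sink} is symmetric in the two kinds of sinks, and Theorem~\ref{t-fr} is invoked as a separate right-sink analogue of Theorem~\ref{t-f}. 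Finally, a smaller remark on the first assertion: the paper obtains the bound on $|G:S(G)|$ by citing \cite[Proposition~3.2]{khu-shu204}, which rests on the Guralnick--Tracey theorem \cite{gur-tra}; your alternative sketch instead hinges on the assertion that a nonabelian finite simple group containing a single nontrivial element with a left Engel sink of cardinality at most $m$ has $m$-bounded order --- a nontrivial classification-based claim that you neither prove nor can simply quote in this form.
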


Here we used the subscript $G$ in  ${\mathscr E}_G(g)$ and  ${\mathscr R}_G(g)$ to emphasize that these sinks are considered in the whole group $G$.

 It is not yet clear if one can exclude the dependence on $|A|$  in the conclusion of Theorem~\ref{t-o} for the index $|G:F_{2\alpha (A)+2}(G)|$. This is known to be possible when $|A|$ is a prime: in~\cite{khu-shu204} we considered a group $G$ satisfying the conditions of Theorem~\ref{t-o} when $A$ is of prime order and proved that then in case (a) the group $G$ has a metanilpotent normal subgroup of $m$-bounded index, and in case (b) the group $G$ has a nilpotent normal subgroup of $m$-bounded index. The proof of Theorem~\ref{t-o} relies on several useful lemmas in~\cite{khu-shu204}. Furthermore, the fact that $|G:S(G)|$ is $m$-bounded has already been established in \cite[Proposition~3.2]{khu-shu204}, so the new information in Theorem~\ref{t-o} relates to the index $|G:F_{2\alpha (A)+2}(G)|$.

 The other two main results relate to the rank of Engel sinks.

\begin{theorem}\label{t-r1}
Let $G$ be a finite group admitting a soluble group of automorphisms $A$ of order coprime to $|G|$. Let $r$ be a positive integer such that
every element $g\in C_G(A)$ has a left or right Engel sink  $\langle {\mathscr E}_G(g)\rangle$ or  $\langle {\mathscr R}_G(g)\rangle$  of rank at most $r$.
Then the rank of the quotient $G/S(G)$ by the soluble radical  is bounded in terms of $r$.
\end{theorem}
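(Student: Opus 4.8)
The plan is to pass to the quotient $\bar G=G/S(G)$ and analyse its socle. By coprime action $C_{\bar G}(A)$ is the image of $C_G(A)$, and if $g\in C_G(A)$ has a left (or right) Engel sink of rank at most $r$ in $G$, then the image of that sink is a left (or right) Engel sink of the image $\bar g$ of rank at most $r$; so it suffices to bound $\operatorname{rank}(\bar G)$ in terms of $r$ when $S(\bar G)=1$. For such $\bar G$ we have $F^*(\bar G)=\operatorname{Soc}(\bar G)=T_1\times\cdots\times T_n$ with each $T_i$ non-abelian simple, and $\bar G\le\operatorname{Aut}(\operatorname{Soc}(\bar G))$, so that $\operatorname{rank}(\bar G)\le\operatorname{rank}(\operatorname{Soc}(\bar G))+\operatorname{rank}(\bar G/\operatorname{Soc}(\bar G))$, where the second factor embeds into $\bigl(\prod_i\operatorname{Out}(T_i)\bigr)\rtimes\operatorname{Sym}(n)$. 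Thus everything reduces to two bounds in terms of $r$: (A) the number $n$ of simple factors, and (B) the rank of each $T_i$ (which also controls $\operatorname{rank}\operatorname{Out}(T_i)$).

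Both $A$ and $\bar G$ permute the set $\{T_1,\dots,T_n\}$; fix the $A$-orbits on it. Let $O$ be an orbit, let $M_O=\prod_{T_j\in O}T_j$, and let $A_O\le A$ be the stabiliser of a chosen factor $T$ in $O$. Then $A_O$ acts coprimely on $T$, and since $A_O$ is soluble (being a section of $A$) and $T$ is non-soluble, it cannot act fixed-point-freely (\cite{row}, \cite{wan-che}); hence $C_T(A_O)\ne1$. By the standard description of fixed points under a transitive coprime permutation action, $C_{M_O}(A)$ is a full diagonal copy of $C_T(A_O)$, and therefore projects nontrivially onto every factor in $O$. Choosing a nontrivial element of $C_{M_O}(A)$ for each orbit and multiplying over the orbits produces an element $g\in C_{\operatorname{Soc}(\bar G)}(A)$, the image of some element of $C_G(A)$, whose projection $g_i$ to \emph{every} factor $T_i$ is nontrivial.

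To bound (A), note that in the direct product $\operatorname{Soc}(\bar G)$ commutators are computed coordinatewise, so the smallest left (resp. right) Engel sink of $g$ in $\operatorname{Soc}(\bar G)$ is $\prod_i\mathscr{E}_{T_i}(g_i)$ (resp. $\prod_i\mathscr{R}_{T_i}(g_i)$); this sink lies inside the sink of $g$ computed in $\bar G$, which by hypothesis generates a subgroup of rank at most $r$, and rank is monotone under subgroups. Since each $g_i\ne1$ and $T_i$ is non-abelian simple, Baer's theorem \cite[12.3.7]{rob} (for which $F(T_i)=Z_\infty(T_i)=1$) gives $\mathscr{E}_{T_i}(g_i)\ne\{1\}$ and $\mathscr{R}_{T_i}(g_i)\ne\{1\}$. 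The crux is then to show that each $\langle\mathscr{E}_{T_i}(g_i)\rangle$ (resp.\ $\langle\mathscr{R}_{T_i}(g_i)\rangle$) has \emph{even order}; granting this, the Feit--Thompson theorem supplies $(C_2)^{\,n}$ inside the product of the $n$ sinks, forcing $n\le r$.

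For (B) I would use that every nontrivial element of the diagonal $C_T(A_O)\cong C_{M_O}(A)$ has an Engel sink of rank at most $r$, together with a CFSG-based analysis of the very restricted coprime automorphisms of simple groups, to conclude $\operatorname{rank}(T_i)\le f(r)$; in fact both (A) and (B) would follow from a single structural statement, namely that for a nontrivial element $t$ of a non-abelian simple group the generated Engel sink $\langle\mathscr{E}_T(t)\rangle$ (resp.\ $\langle\mathscr{R}_T(t)\rangle$) is \emph{non-soluble}, hence of even order and of rank bounded below in terms of $\operatorname{rank}(T)$. This simple-group input is exactly where I expect the real difficulty to lie, and it is what separates the rank problem from the cardinality problem solved in Theorem~\ref{t-o}: for cardinalities a product of $n$ nontrivial groups automatically has order at least $2^n$, so no structural information is needed, whereas for ranks a product of nontrivial groups of pairwise coprime orders can have rank $1$, and it is precisely the non-solubility (equivalently, the even order) of the individual Engel sinks that prevents the factors from cancelling. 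Establishing that repeated commutation by a nontrivial element of a simple group never collapses to a soluble set of values — presumably through a case analysis across the families of finite simple groups — is the heart of the matter; the remaining assembly of the bounds on $n$, on $\operatorname{rank}(T_i)$, on $\operatorname{Out}(T_i)$ and on $\operatorname{Sym}(n)$ into a bound on $\operatorname{rank}(\bar G)$ is then routine.
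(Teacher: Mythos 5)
Your opening reductions coincide with the paper's: pass to $G/S(G)$, decompose $F^*(G)=T_1\times\cdots\times T_n$, use the rank-$3$ bound on the outer automorphism groups to reduce to bounding the rank of the socle, and use Wang--Chen to produce an element of $C_G(A)$ with non-trivial projection onto every simple factor (the paper, crucially, arranges this element to be an \emph{involution} $\tau$). From there, however, your argument rests on an unproven structural claim, which you yourself flag as ``the heart of the matter'': that for every non-trivial element $t$ of every non-abelian finite simple group $T$ the subgroup $\langle\mathscr{E}_T(t)\rangle$ (resp.\ $\langle\mathscr{R}_T(t)\rangle$) is non-soluble. Deferring this to an uncarried-out CFSG case analysis leaves the proposal without a proof of its key step. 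Moreover, even granting that claim, your step (B) fails: a non-soluble subgroup of $T$ need \emph{not} have rank bounded below in terms of $\operatorname{rank}(T)$ --- for instance $A_5$, of rank $2$, embeds in alternating (and many other) simple groups of arbitrarily large rank --- so non-solubility of the sinks would at best bound the number $n$ of factors, not the ranks of the individual $T_i$, which your scheme also requires. (A smaller inaccuracy: the minimal sink of $g$ in the direct product is not the full product $\prod_i\mathscr{E}_{T_i}(g_i)$ as a set, since eventual commutator cycles in different coordinates need not synchronize; the generated subgroups do coincide, which is all you actually use.)

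The missing idea that makes the paper's proof work is the Guralnick--Tracey theorem (Theorem~\ref{t-g-t}): if $\tau$ is an involutive automorphism of a finite group $K$ with $[K,\tau]=K$, then $K=\langle J(\tau)\rangle$, where $J(\tau)$ is the set of odd-order elements inverted by $\tau$. Having chosen the fixed point $\tau\in C_G(H)$ to be an involution with non-trivial projections, simplicity of the factors gives $[G,\tau]=G$ with $G=F^*(G)$, so $G=\langle J(\tau)\rangle$; and for each $g\in J(\tau)$ one has $\langle g\rangle=[\langle g\rangle,\tau]\subseteq\mathscr{E}(\tau)\cap\mathscr{R}(\tau)$ by Lemma~\ref{l-sink}, whence $G=\langle J(\tau)\rangle\leq\langle\mathscr{E}(\tau)\rangle\cap\langle\mathscr{R}(\tau)\rangle$ has rank at most $r$. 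This single application simultaneously bounds $n$ and the ranks of all the $T_i$, with no per-factor analysis of Engel sinks in simple groups needed; it is exactly the ingredient your proposal is searching for and does not supply.
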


In Theorem~\ref{t-r1},  ``left or right'' is applied individually to elements of $C_G(A)$, so that it may be different, left or right, Engel  sinks for different  elements of $C_G(A)$.

\begin{theorem}\label{t-r2}
Let $G$ be a soluble finite group admitting a soluble group of automorphisms $A$ of order coprime to $|G|$. Let $r$ be a positive integer such that either
\begin{itemize}
  \item[\rm (a)] every element $g\in C_G(A)$ has a left Engel sink $\langle {\mathscr E}_G(g)\rangle$  of rank at most $r$, or
  \item[\rm (b)] every element $g\in C_G(A)$ has a right Engel sink   $\langle {\mathscr R}_G(g)\rangle$  of rank at most $r$.
\end{itemize}
Then the index  $|G:F_{4^{\alpha (A)}+4\alpha (A)+3}(G)|$  is bounded in terms of $|A|$ and $r$, where $\alpha (A)$ is the composition length of $A$.
\end{theorem}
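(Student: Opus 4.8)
The guiding principle is that, in a coprime soluble setting, the obstruction to an element of $C_G(A)$ being a (left or right) Engel element is exactly the \emph{semisimple} part of its action on the chief factors, the \emph{unipotent} part being automatically Engel and absorbed into the Fitting subgroup. The plan is therefore to show that the rank bound on Engel sinks confines this semisimple obstruction to bounded rank on every $A$-invariant chief factor, and then to convert this bounded-rank control into the asserted index bound using coprime module theory together with the rank estimate of Khukhro--Mazurov for $G/F_{4^{\alpha(A)}-1}(G)$. I would run the reduction by induction on the composition length $\alpha(A)$, peeling off one prime-order layer of $A$ at a time with the prime-order analysis of \cite{khu-shu204} as the base; each layer should cost a bounded number of Fitting terms, which is what produces the linear summand $4\alpha(A)$.

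The technical heart is the chief-factor lemma. Given $g\in C_G(A)$ with a left Engel sink of rank at most $r$ and an $A$-invariant elementary abelian chief factor $V$ of characteristic $p$, the iterated commutators satisfy $[v,{}_k g]=v(g-1)^k$, so the smallest sink dominates the stable image $V(g-1)^{\infty}$, the sum of the non-trivial-eigenvalue components; the hypothesis forces this to have $r$-bounded dimension. Equivalently, writing $g=g_pg_{p'}$, the semisimple $p'$-part $g_{p'}$ acts trivially on $V$ modulo an $r$-bounded subspace, i.e. $[V,g_{p'}]$ has bounded rank. Since $A$ centralizes the powers $g_p,g_{p'}$ of $g$, this says that the $p'$-elements of $C_G(A)$ act almost trivially on every chief factor --- the ``almost fixed-point-free'' substitute for Baer's theorem, which in the limiting case of trivial sinks would place $C_G(A)$ inside $F(G)$.

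Globalising this per-factor control is the next step: combining the layerwise induction with the coprime commutator identities $G=C_G(A)[G,A]$ and $[G,A,A]=[G,A]$, I would produce an $A$-invariant section, lying within $4\alpha(A)+O(1)$ Fitting terms, on which the semisimple part of $C_G(A)$ is genuinely controlled. Applying the Khukhro--Mazurov rank bound then gains the further $4^{\alpha(A)}-1$ Fitting terms, bounding the rank of the resulting top quotient; finally, since $G$ is soluble, I would upgrade rank to cardinality chief factor by chief factor, using that on an irreducible module in coprime characteristic an almost-trivial semisimple action of bounded rank forces bounded dimension, and hence bounded order. Adding the contributions $4\alpha(A)+O(1)$ from the reduction and $4^{\alpha(A)}-1$ from Khukhro--Mazurov, and tracking the remaining constant, gives the stated $4^{\alpha(A)}+4\alpha(A)+3$.

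The main obstacle I anticipate is precisely this globalisation without inflation of Fitting height: a bounded-rank normal subgroup can itself have unbounded Fitting height, so one cannot simply factor out the accumulated ``error'' and add Fitting heights. This forces the Engel/unipotent reduction, the Khukhro--Mazurov rank step, and the Hartley--Isaacs/Turull cardinality step to be interleaved layer by layer rather than applied as independent black boxes, and it is here that solubility is essential --- it is what converts the bounded-rank control of the semisimple part into the bounded-index conclusion (in the non-soluble Theorem~\ref{t-r1} only a rank statement survives). A secondary difficulty is the left/right asymmetry: for right sinks the relevant operator analysis is on $[g,{}_k x]$ with $x$ ranging over $G$, a dual condition that must be handled separately and reconciled with the prime-order case, exactly as in the cardinality results of \cite{khu-shu204}.
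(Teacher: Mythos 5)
Your local analysis is essentially the same as the paper's and is sound: via Lemma~\ref{l-sink} (equivalently your operator computation $[v,{}_kg]=v(g-1)^k$), the rank hypothesis confines $[V,h]$ to rank at most $r$ for the coprime ($p'$-) part of the action of each $h\in C_G(A)$ on the relevant abelian sections, and this, through Lemma~\ref{l33r} and Lemma~\ref{l-kov}, bounds the rank of the fixed points of $A$ in suitable quotients of $G$. You also correctly identify the crux: a rank bound on a top quotient cannot simply be ``factored out''. But your proposed resolution of that crux is where the proof breaks. You want to ``upgrade rank to cardinality chief factor by chief factor'', arguing that a bounded-rank almost-trivial action on an irreducible coprime module forces bounded dimension. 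Even granting that claim (it is provable: the acting group has $r$-bounded rank by Lemma~\ref{l33r}, and $[V,Q]=\sum_i[V,q_i]$ over a bounded generating set), it does not yield the theorem: the conclusion is a bound on the \emph{index} $|G:F_{4^{\alpha(A)}+4\alpha(A)+3}(G)|$, i.e.\ on the order of the top quotient, and bounding each chief factor's order says nothing about the \emph{number} of chief factors --- a cyclic $2$-group has rank $1$ and all chief factors of order $2$, yet unbounded order. No rank information alone can ever produce an order bound; some independent finiteness input is needed, and your outline contains none. Invoking the \emph{rank} version of Khukhro--Mazurov for $G/F_{4^{\alpha(A)}-1}(G)$, as you do, compounds this: it again outputs only a rank bound.

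The paper's mechanism, which your sketch has no substitute for, is a two-step device. First, for each pair of primes $p\ne t$, the fixed points of $A$ in $\bar{\bar G}=\bigl(G/O_{p',p}(G)\bigr)/O_{t',t}(\bar G)$ have $r$-bounded rank (two primes are needed so that every Sylow subgroup of $C_G(A)$ lies in a Hall $p'$- or $t'$-subgroup), whence Corollary~\ref{c-km} --- the \emph{order} version of Khukhro--Mazurov combined with Thompson's $F_4$-theorem and Hartley--Isaacs --- bounds the order of $\bar{\bar G}/F_{4^{\alpha(A)}+2\alpha(A)-1}(\bar{\bar G})$. Second, and this is the key trick: an order bound does not survive the subdirect embedding of $\bar G/F(\bar G)$ into $\prod_{t\ne p}\bar G/O_{t',t}(\bar G)$, but the resulting \emph{exponent} bound does; since $\bigcap_{t\ne p}O_{t',t}(\bar G)=F(\bar G)$ and $\bigcap_p O_{p',p}(G)=F(G)$, it follows that $\widetilde G=G/F_{4^{\alpha(A)}+2\alpha(A)+1}(G)$ has $(|A|,r)$-bounded exponent. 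Now rank plus exponent bounds order (Lemma~\ref{l-exp}), so in $\widetilde G$ the Engel sinks of elements of $C_{\widetilde G}(A)$ have bounded cardinality, and the cardinality Theorem~\ref{t-o} finishes the proof, contributing the final $2\alpha(A)+2$ Fitting terms. Two secondary misattributions in your outline: the summand $4\alpha(A)$ arises from two applications of Hartley--Isaacs (inside Corollary~\ref{c-km} and inside Theorem~\ref{t-o}), not from a per-layer cost of an induction on $\alpha(A)$ (the paper's proof of this theorem is not inductive on $\alpha(A)$); and \cite{khu-shu204} treats bounded-cardinality, not bounded-rank, sinks, so it cannot serve as the base of the induction you propose.
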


Combination of Theorems~\ref{t-r1} and \ref{t-r2} yields the following.

\begin{corollary}\label{c-1}
  Let $G$ be a finite group admitting a soluble group of automorphisms $A$ of order coprime to $|G|$. Let $r$ be a positive integer such that either
\begin{itemize}
  \item[\rm (a)] every element $g\in C_G(A)$ has a left Engel sink $\langle {\mathscr E}_G(g)\rangle$  of rank at most $r$, or
  \item[\rm (b)] every element $g\in C_G(A)$ has a right Engel sink   $\langle {\mathscr R}_G(g)\rangle$  of rank at most $r$.
\end{itemize}
Then the rank of $G/F_{4^{\alpha (A)}+4\alpha (A)+3}(G)$  is bounded in terms of $|A|$ and $r$, where $\alpha (A)$ is the composition length of $A$.
\end{corollary}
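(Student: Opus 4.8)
The plan is to combine Theorems~\ref{t-r1} and~\ref{t-r2} by passing to the soluble radical $S=S(G)$. Write $h=4^{\alpha(A)}+4\alpha(A)+3$. First I would apply Theorem~\ref{t-r1}: in either case (a) or case (b) every element of $C_G(A)$ has a left \emph{or} right Engel sink of rank at most $r$, which is precisely the (individually applied) hypothesis of Theorem~\ref{t-r1}. Hence $G/S(G)$ has $r$-bounded rank. It remains to control $S(G)/F_h(G)$.

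Since $S=S(G)$ is characteristic in $G$ and $|A|$ is coprime to $|G|$, the group $A$ acts coprimely on the soluble group $S$, with $C_S(A)=C_G(A)\cap S$. The key point is to transfer the sink hypothesis from $G$ to $S$. Fix $g\in C_S(A)$. Because $S$ is normal in $G$, for every $x\in S$ all commutators $[x,\,{}_{l}g]$ lie in $S$; combining this with $[x,\,{}_{l}g]\in{\mathscr E}_G(g)$ for all large $l$ shows that ${\mathscr E}_G(g)\cap S$ is a left Engel sink of $g$ inside $S$. Consequently ${\mathscr E}_S(g)\subseteq {\mathscr E}_G(g)\cap S$, so $\langle{\mathscr E}_S(g)\rangle\leq\langle{\mathscr E}_G(g)\rangle$ has rank at most $r$; the same argument applies verbatim to right sinks, giving ${\mathscr R}_S(g)\subseteq {\mathscr R}_G(g)\cap S$. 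Thus $S$ satisfies the hypothesis of Theorem~\ref{t-r2} in case (a) or case (b), respectively, with the same $r$, and Theorem~\ref{t-r2} yields that $|S:F_h(S)|$ is $(|A|,r)$-bounded.

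Finally I would identify $F_h(S)$ with $F_h(G)$. Since every $F_k(G)$ is soluble and normal in $G$, it lies in $S$ and is normal in $S$; a routine induction --- using that $S/F_k(G)$ is the soluble radical of $G/F_k(G)$, together with the fact that the Fitting subgroup of a group and of its soluble radical coincide --- gives $F_k(G)=F_k(S)$ for all $k$, and in particular $F_h(G)=F_h(S)$. Therefore $S/F_h(G)$ has $(|A|,r)$-bounded order, while $G/S$ has $r$-bounded rank by the first step. Since $F_h(G)\leq S\leq G$, the standard fact that the rank of a finite extension is at most the sum of the ranks of a normal subgroup and the corresponding quotient shows that $G/F_h(G)$ has $(|A|,r)$-bounded rank, as required.

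As this statement is a combination of the two preceding theorems, I expect no genuinely new obstacle here: the substantive content lies entirely in Theorems~\ref{t-r1} and~\ref{t-r2}. The two points that require care are the localization ${\mathscr E}_S(g)\subseteq{\mathscr E}_G(g)\cap S$ (and its right analogue), which lets me apply Theorem~\ref{t-r2} to $S$ with the sinks computed \emph{in $S$}, and the identity $F_h(G)=F_h(S)$, which is what makes the index bound for $S$ translate into a rank bound for $G$ modulo the \emph{same} term of the Fitting series.
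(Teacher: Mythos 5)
Your proposal is correct and is exactly the argument the paper intends: the paper simply states that the corollary follows by ``combination of Theorems~\ref{t-r1} and~\ref{t-r2}'', and your write-up supplies precisely that combination (pass to $S(G)$, transfer the sink hypothesis, apply Theorem~\ref{t-r2} there, identify $F_h(S)$ with $F_h(G)$, and add the rank bounds). The two details you flag --- the inclusion ${\mathscr E}_S(g)\subseteq{\mathscr E}_G(g)\cap S$ and the identity $F_h(G)=F_h(S)$ --- are verified correctly.
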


Note that while it is well-known that the inverse of a right Engel element is left Engel, there is no such a straightforward connection between left and right Engel sinks, and parts (b) in Theorems~\ref{t-o} and~\ref{t-r2} and Corollary~\ref{c-1} are not consequences of parts (a). Rather, the proofs of these parts are similar and are mostly conducted simultaneously.

Other results concerning automorphisms whose fixed points satisfy restrictions on their Engel sinks were  obtained in \cite{acc-khu-shu, acc-shu-sil18, acc-shu-sil19, acc-sil18, acc-sil20, khu-shu202}. Generalizations of Engel conditions for finite, profinite, and compact groups  using the concept of Engel sinks were  considered in  \cite{khu-shu16,
khu-shu18,
khu-shu18a,
khu-shu19,
khu-shu20,
khu-shu-tra}. The natural aim of such studies is proving that groups (in certain classes) that have `small' Engel sinks have properties close to the nice properties of Engel groups (in which the sinks are trivial). As examples of such nice properties we mention that finite Engel groups are nilpotent by M.~Zorn's theorem \cite[12.3.4]{rob}, while
 J.~S.~Wilson and E.~I.~Zelmanov \cite{wi-ze} proved that profinite Engel groups are locally nilpotent. At the same time, E.~S.~Golod's examples \cite{gol} show that Engel groups in general may not be locally nilpotent.

For finite groups we proved in  \cite{khu-shu18} and  \cite{khu-shu19} the following quantitative results, which are used in this paper.

\begin{theorem}[{\cite[Theorem~3.1]{khu-shu18}}]\label{t-f}
Let $G$ be a finite group, and $m$  a positive integer. Suppose that every element $g\in  G$ has a left Engel sink ${\mathscr E}(g)$ of cardinality  at most~$m$.
 Then $G$ has a normal subgroup $N$ of order bounded in terms of $m$ such that $G/N$ is nilpotent.
\end{theorem}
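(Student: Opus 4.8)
The plan is to describe the smallest sink concretely and then reduce everything to linear algebra on chief factors. Since $G$ is finite, for each $g$ the map $\tau_g\colon x\mapsto [x,g]$ makes the sequence $[x,\,{}_{l}g]=\tau_g^{l}(x)$ eventually periodic, so the smallest left Engel sink is the set of $\tau_g$-periodic points, ${\mathscr E}(g)=\bigcap_{l\geq 1}\tau_g^{l}(G)$; it contains $1$, is permuted by $\tau_g$, and by hypothesis has at most $m$ elements. I would first record that this condition is inherited by every quotient and by every $g$-invariant subgroup, and in particular by every normal section $H/K$, because there the image of ${\mathscr E}(g)$ is a left Engel sink of cardinality at most $m$.

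The crux is the behaviour on a chief factor. Let $V=H/K$ be a chief factor of $G$, an elementary abelian $p$-group regarded as an $\F_p$-module, on which $g$ acts as a linear map $A$; then $\tau_g$ induces $A-1$ on $V$, so the stable subspace $\operatorname{Im}(A-1)^{\infty}=[V,\,{}_{l}g]$ (for large $l$) is covered by the image of ${\mathscr E}(g)$ and hence has order at most $m$. Applying this with $V$ replaced by the whole Frattini quotient of $O_p(F(G))$ bounds the total moved subspace at once, which prevents accumulation across the several chief factors sitting inside one prime. I would then split on $p$. If $p>m$, the stable subspace must be trivial, so every element of $G$ acts unipotently on $V$; since a finite group of unipotent $\F_p$-matrices is a $p$-group and a nontrivial $p$-group acting on a nonzero $\F_p$-module has nonzero fixed points, irreducibility forces $[V,G]=1$, i.e. every chief factor at a prime exceeding $m$ is central. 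If $p\leq m$, then every $p'$-element $g$ acts semisimply with $\dim_{\F_p}[V,g]\leq\log_2 m$, so on the relevant module every $p'$-element of $G$ fixes a subspace of bounded codimension.

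Before assembling, I would dispose of the nonsoluble part. By the reductions standard for such problems, all based on the classification of finite simple groups, the sink hypothesis forces the nonabelian composition factors of $G$ to have $m$-bounded order and to be $m$-boundedly many; hence the product of the nonabelian chief factors of $G$ is a normal subgroup of $m$-bounded order, which I absorb into the sought subgroup $N$ and after which I may assume $G$ soluble (the sink condition persisting in the quotient).

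It remains to turn the local dichotomy into the global statement that the nilpotent residual $\gamma_{\infty}(G)$ has $m$-bounded order, which is equivalent to $G/N$ being nilpotent for a bounded $N$. Since all chief factors at primes $p>m$ are central, $G$ acts nontrivially only on the $\pi$-part of $F(G)/\Phi(F(G))$ for the $m$-bounded set $\pi=\{p\leq m\}$, and on each summand every $p'$-element moves a subspace of bounded dimension. The step I expect to be the main obstacle is the linear-group input needed here: a completely reducible $p'$-group of $\F_p$-matrices in which every element moves at most $c$ dimensions has commutator module of $c$-bounded dimension and bounded image; this is the quantitative form of ``almost fixed-point-free implies almost trivial'' and is proved via Clifford's theorem together with classical bounds on finite linear groups. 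Feeding this into the action of $G$ on $F(G)/\Phi(F(G))$, and using that the global sink bound simultaneously forbids the large-prime central factors inside $\gamma_{\infty}(G)$ that would otherwise make it unbounded, should yield an $m$-bounded order for $\gamma_{\infty}(G)$. Carrying out this coprime bookkeeping over the bounded prime set $\pi$, while keeping every constant a function of $m$ alone, is the delicate part.
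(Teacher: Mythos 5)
You are working against a statement that the paper itself does not prove: Theorem~\ref{t-f} is imported verbatim from \cite[Theorem~3.1]{khu-shu18}, so your proposal can only be judged on its own correctness and against the strategy of that literature. Much of your local analysis is sound and is indeed how these papers operate: the identification of the minimal sink with $\bigcap_{l}\tau_g^l(G)$, the fact that on an abelian $p$-section the stable set of Engel commutators is the Fitting-stable image of $A-1$ and hence has order at most $m$, the centrality of chief factors for primes $p>m$, and the linear-group lemma you flag as the main obstacle, which is true and is essentially Lemma~\ref{l33} (quoted in this paper from \cite{khu-shu204}, and proved along the lines of Lemma~\ref{l33r}). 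But two steps are genuinely broken, not merely unpolished.

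First, the insoluble reduction is both unproved and structurally false as formulated. Nonabelian chief factors are sections, not subgroups, so their ``product'' is not a normal subgroup of $G$; more importantly, the insolubility of a finite group lives in the top quotient $G/S(G)$, not in a bounded normal subgroup at the bottom that you could ``absorb into $N$''. For $G/N$ to be nilpotent, $N$ must contain the soluble residual $G^{(\infty)}$, and bounding the nonabelian composition factors (equivalently, bounding $|G/S(G)|$) does not bound $G^{(\infty)}$: the group $G=C_p^{\,n}\rtimes A_5$, with $p\nmid 30$ and $C_{C_p^{\,n}}(A_5)=1$, is perfect and has $|G/S(G)|=60$, yet its only normal subgroup with soluble quotient is $G$ itself. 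That example of course violates the sink hypothesis, but that is exactly the point: after bounding $|G/S(G)|$ one must invoke the hypothesis \emph{again} to control how the bounded top acts on the soluble radical, and your proposal never does this. Second, even in the soluble case the conclusion is reached only modulo your own admission that the ``coprime bookkeeping \dots is the delicate part''. The deferred difficulty is real: on a $p$-chief factor the sink condition says nothing about $p$-elements (nor about the unipotent parts of mixed elements), so your lemma cannot be applied to the image of all of $G$ on $F(G)/\Phi(F(G))$. The standard repair --- visible in this paper's proof of Theorem~\ref{t-r2} --- is to pass to $\bar G=G/O_{p',p}(G)$ acting faithfully on the Frattini quotient of $O_{p',p}(G)/O_{p'}(G)$, bound a Hall $p'$-subgroup of $\bar G$ via Lemma~\ref{l-sink} and Lemma~\ref{l33}, use $O_p(\bar G)=1$ and $C_{\bar G}(F(\bar G))\leq F(\bar G)$ to bound all of $\bar G$, and then intersect over the boundedly many primes $p\leq m$; and even after that, converting a bound on $|G/F(G)|$ into a bound on $|\gamma_\infty(G)|$ requires the sink condition once more. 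As written, you have a correct collection of local facts, not a proof.
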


\begin{theorem}[{\cite[Theorem~3.1]{khu-shu19}}]\label{t-fr}
Let $G$ be a finite group, and $m$  a positive integer. Suppose that every element $g\in  G$ has a right Engel sink ${\mathscr R}(g)$ of cardinality  at most~$m$.
 Then $G$ has a normal subgroup $N$ of order bounded in terms of $m$ such that $G/N$ is nilpotent.
\end{theorem}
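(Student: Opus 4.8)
The plan is to reduce the statement to a boundedness property of a single normal subgroup, and then to establish that property by passing to the soluble case and analysing coprime actions through the Fitting series. First I would record that the hypothesis is inherited by quotients: if $g$ has a right Engel sink $\mathscr R(g)$ with $|\mathscr R(g)|\le m$, then for any $N\trianglelefteq G$ the image $\bar g$ in $G/N$ has the right Engel sink $\overline{\mathscr R(g)}$, again of cardinality at most $m$, since $[\bar g,{}_k\bar x]=\overline{[g,{}_k x]}$. Next I would reformulate the conclusion. Because the smallest right Engel sinks satisfy $\mathscr R(g^h)=\mathscr R(g)^h$, the subgroup $N_0:=\langle \mathscr R(g)\mid g\in G\rangle$ is normal in $G$. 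If one shows that $|N_0|$ is $m$-bounded, then in $G/N_0$ every element $\bar g$ is a right Engel element (its tails $[\bar g,{}_k\bar x]$ are trivial), so by Baer's theorem \cite[12.3.7]{rob} every element of $G/N_0$ lies in $F(G/N_0)$, whence $G/N_0=F(G/N_0)$ is nilpotent and we may take $N=N_0$. Thus the theorem is equivalent to bounding $|N_0|$.

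Second, I would bound the non-soluble part. The generalized Fitting subgroup of $\bar G=G/S(G)$ is a product of non-abelian simple groups permuted and acted on by $\bar G\hookrightarrow \mathrm{Aut}(\mathrm{soc}\,\bar G)$. Using the classification of finite simple groups one argues that a large non-abelian composition factor, or a large number of such factors, forces some element to have a right Engel sink of cardinality exceeding $m$: a non-abelian simple group has trivial hypercenter, so non-trivial elements already have non-trivial sinks, and one can exhibit elements whose iterated commutators run through arbitrarily large sets as the simple section grows. Hence $|G:S(G)|$ is $m$-bounded, and after absorbing this bounded section into $N$ it suffices to treat the case $G$ soluble.

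For soluble $G$ the heart of the matter is to bound $|N_0|$, and here I would work prime by prime along the Fitting series. For a $p'$-element $x$ and a $p$-section $P$ on which $x$ acts, coprime action gives $[P,x]=[P,{}_2x]=\cdots$ with $x$ acting fixed-point-freely on $[P,x]$, so the map $y\mapsto[y,x]$ is a bijection of $[P,x]$; for $g\in[P,x]$ the tail of $[g,{}_k x]$ is therefore exactly the cycle of $g$ under this bijection. Since that cycle lies in $\mathscr R(g)$, the bound $|\mathscr R(g)|\le m$ forces every such cycle to have length at most $m$, simultaneously for all admissible $x$. Estimating cycle lengths in terms of the order of $x$ and of the $\mathbb F_p$-module structure of $[P,x]$ then bounds both the exponent and the rank of each $[P,x]$, and assembling this control over all primes and all layers of the Fitting series bounds the non-nilpotent part of the action, hence $|N_0|$. (The same-prime contributions, where $x$ is a $p$-element acting on a $p$-group, are harmless, as the tails there tend to $1$.)

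The main obstacle, in my view, is precisely this last step: converting the purely local statement ``every coprime cycle has length at most $m$, for every $x$'' into a genuine $m$-bound on the order of a normal subgroup, uniformly across the several layers of the Fitting series, without the bound degrading as one climbs the series. Controlling the interaction between consecutive Fitting layers---so that boundedness obtained on one layer is not destroyed when $x$ acts nontrivially on two layers at once---is what makes the soluble case genuinely delicate, and is where I would expect to invoke Hall--Higman-type module estimates in combination with the reductions above.
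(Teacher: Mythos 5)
First, a point of comparison: the paper does not prove Theorem~\ref{t-fr} at all --- it is quoted as a black box from \cite[Theorem~3.1]{khu-shu19} --- so your proposal can only be judged against the method of that source and against the way the present paper deploys the same tools (Lemma~\ref{l-sink} and Lemma~\ref{l33} in Proposition~\ref{pr-h2}). Judged that way, your proposal has a genuine gap, and it sits exactly where you located your ``main obstacle''. In the soluble case you apply the hypothesis only to elements $g$ lying \emph{inside} the module $[P,x]$, concluding that every cycle of the bijection $y\mapsto[y,x]$ of $[P,x]$ has length at most $m$, and you then assert that this ``bounds both the exponent and the rank of each $[P,x]$''. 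That assertion is false. Let $V=\F_7^{\,n}$ and let $x$ act on $V$ as scalar multiplication by $3$, a coprime fixed-point-free automorphism of order $6$; then $y\mapsto[y,x]$ is multiplication by $2$, which has multiplicative order $3$ modulo $7$, so every cycle (for every $g\in V$, and likewise for the actions of all powers of $x$) has length at most $6$, while $[V,x]=V$ has rank $n$ and order $7^{n}$ with $n$ arbitrary. So the local data you collect are compatible with $[P,x]$ being arbitrarily large, and no Hall--Higman-type estimate can recover a bound from them; in the semidirect product $V\langle x\rangle$ your argument would run to completion and yet the conclusion of the theorem fails there.

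The missing idea is that the hypothesis must be applied to the \emph{acting} element $x$ itself, not only to the elements being acted upon. By Lemma~\ref{l-sink} (Heineken's formula combined with coprime action), $[V,x]={\mathscr R}_{V\langle x\rangle}(x)\subseteq{\mathscr R}_G(x)$ (working in the appropriate quotient, where the hypothesis is inherited as in Remark~\ref{r-inh}); hence $|[V,x]|\le m$ \emph{directly} --- precisely the bound your cycle analysis cannot produce, and precisely what fails in the example above, where ${\mathscr R}(x)=V$ shows the hypothesis is violated by $x$ rather than by elements of $V$. Once $|[V,x]|\le m$ holds for every coprime acting element, Lemma~\ref{l33} bounds the order of the group of automorphisms induced on each layer, and the assembly along the Fitting series can proceed; this is how \cite{khu-shu19} argues and how the present paper uses the identical mechanism in Proposition~\ref{pr-h2}. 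Two further steps of your sketch are also unsupported: the CFSG-based claim that large simple sections force large right Engel sinks is asserted rather than proved, and the reduction ``$|G:S(G)|$ is $m$-bounded, so absorb this section into $N$'' does not work as stated --- a bounded \emph{quotient} cannot be absorbed into a bounded-order \emph{normal subgroup}, and a group with a nilpotent normal subgroup of bounded index need not have a bounded normal subgroup with nilpotent quotient (dihedral groups of order $2p$ show this), so the action of the insoluble top on $S(G)$ must itself be controlled by the sink hypothesis.
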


\section{Preliminaries}\label{s-p}

For a group $A$ acting by automorphisms on a group $B$ we use the usual notation for commutators $[b,a]=b^{-1}b^a$ and commutator subgroups $[B,A]=\langle [b,a]\mid b\in B,\;a\in A\rangle$, as well as for centralizers $C_B(A)=\{b\in B\mid b^a=b \text{ for all }a\in A\}$ and $C_A(B)=\{a\in A\mid b^a=b \text{ for all }b\in B\}$.
We usually denote by the same letter $\varphi $ the automorphism induced by an automorphism $\varphi  $ on the quotient by a normal $\varphi $-invariant subgroup.

 We say for short that an automorphism $\varphi$ of a finite group $G$ is a coprime automorphism if  the orders of $\varphi$ and $G$ are coprime: $(|G|,|\varphi|)=1$.  The following lemma collects several well-known properties of coprime automorphisms of finite groups.

\begin{lemma}\label{l-nakr}
Let $A$ be a group acting by coprime automorphisms on a finite group $G$.
\begin{itemize}
  \item[\rm (a)]  If $N$ is a normal $A$-invariant subgroup of~$G$, then the fixed points of $A$ in  the quotient $G/N$ are covered by the fixed points of $A$ in $G$, that is, $C_{G/N}(A) = C_G(A)N/N$.

  \item[\rm (b)] We have $[[G, A],A]=[G,A]$.

  \item[\rm (c)]   If $G$ is abelian, then $G=[G,A]\times C_G(A)$.
\end{itemize}
  \end{lemma}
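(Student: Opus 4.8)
My plan is to treat the three parts in dependency order, since the real content is part~(a): once (a) is available, the decomposition $G=[G,A]C_G(A)$ and hence parts~(b) and~(c) follow by short formal arguments. Coprimeness will be used essentially only once, in the proof of~(a), via the Schur--Zassenhaus conjugacy of complements.

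For part~(a), the inclusion $C_G(A)N/N\subseteq C_{G/N}(A)$ is immediate, so the point is the reverse one. I would work inside the semidirect product $\Gamma=G\rtimes A$, in which $N\trianglelefteq\Gamma$ because $N$ is normal in $G$ and $A$-invariant. Given $\bar g=gN\in C_{G/N}(A)$, the coset $gN$ is $A$-invariant, so writing the action as $g^a=a^{-1}ga$ one gets $g^a\in gN$, i.e.\ $[g,a]\in N$ for all $a\in A$; consequently $g^{-1}ag=a[a,g]\in AN=NA$, so $g$ normalizes the subgroup $NA$. Now $A$ and $A^g=g^{-1}Ag$ are two complements to $N$ in $NA$ (each meets $N$ trivially because $(|N|,|A|)=1$), so Schur--Zassenhaus yields $n\in N$ with $A^g=A^n$. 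Then $x:=gn^{-1}$ satisfies $A^x=A$, and since $x\in G$ with $G\trianglelefteq\Gamma$ we have $[A,x]\le A\cap G=1$; thus $x\in C_G(A)$ and $\bar g=xN\in C_G(A)N/N$.

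Before (b) and (c) I would record the decomposition $G=[G,A]C_G(A)$: applying (a) with $N=[G,A]$ and noting that $A$ acts trivially on $G/[G,A]$, hence $C_{G/[G,A]}(A)=G/[G,A]$, gives exactly this. For part~(b), put $V=[G,A]$, which is $A$-invariant and normal in $G$, with $[V,A]\le V$; I must prove equality. Passing to $G/[V,A]$ reduces to the case in which $A$ centralizes $V$, where the goal becomes $V=1$. In that situation I use $G=V\,C_G(A)$ together with the identity $[vc,a]=[v,a]^c[c,a]$ for $v\in V$, $c\in C_G(A)$, $a\in A$: both factors vanish, so every generator $[g,a]$ of $[G,A]=V$ is trivial, forcing $V=1$ and hence $[V,A]=V$ after undoing the reduction. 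For part~(c), with $G$ abelian I would write it additively and use the averaging idempotent: setting $n=|A|$ and choosing $n'$ with $nn'\equiv 1\pmod{\exp G}$, the $A$-endomorphism $e(g)=n'\sum_{a\in A}g^a$ satisfies $e^2=e$, has image $C_G(A)$ and kernel $[G,A]$, whence $G=C_G(A)\oplus[G,A]$.

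The main obstacle is concentrated entirely in part~(a); parts~(b) and~(c) are then essentially commutator bookkeeping. Within (a) the only genuinely non-formal input is the conjugacy of the complements $A$ and $A^g$ in $NA$, which is where coprimeness is indispensable. If one prefers, for abelian $N$ this conjugacy can be replaced by the vanishing of $H^1(A,N)$, but the semidirect-product argument above has the advantage of covering nonabelian $N$ uniformly, so I would present it in that generality.
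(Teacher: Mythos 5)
Your proof is correct. Note, however, that the paper offers no proof to compare against: Lemma~\ref{l-nakr} is presented there explicitly as a collection of well-known properties of coprime actions, the three parts being classical facts from the literature (see, e.g., Gorenstein's \emph{Finite Groups}, \S 5.3 and \S 6.2). What you supply is precisely the standard textbook treatment: (a) via Schur--Zassenhaus conjugacy of complements inside $G\rtimes A$, then the decomposition $G=[G,A]C_G(A)$ as a corollary of (a), then (b) from this decomposition, and (c) by the averaging idempotent. Two details are worth making explicit if you want the write-up airtight. First, the conjugacy half of Schur--Zassenhaus requires one of $N$, $A$ to be soluble; here that is automatic, since coprimeness of $|N|$ and $|A|$ forces one of them to have odd order (Feit--Thompson), and most modern statements of the theorem build this in. Second, in (b) the passage to $G/[V,A]$ tacitly assumes $[V,A]\trianglelefteq G$, which does not follow formally from $[V,A]\trianglelefteq V$ and $A$-invariance; it does hold, because for $c\in C_G(A)$ one has $[v,a]^c=[v^c,a]$ (as $c$ and $a$ commute in $G\rtimes A$, and $v^c\in V$ since $V\trianglelefteq G$), so $C_G(A)$ normalizes $[V,A]$, and then so does $G=VC_G(A)$. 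In fact that same identity lets you dispense with the quotient altogether: writing any $g\in G$ as $g=vc$ with $v\in V$, $c\in C_G(A)$, every generator $[vc,a]=[v,a]^c=[v^c,a]$ of $[G,A]=V$ already lies in $[V,A]$, giving $V\leq [V,A]$ and hence equality directly.
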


We shall need another useful lemma on coprime actions.

  \begin{lemma}[{\cite[Lemma~2.4]{khu-shu204}}]\label{l33}
Let $V$ be an abelian  finite group,  $U$ a group of coprime
automorphisms of $V$, and $m$ a positive integer.  If $|[V,u]|\leq m$ for every $u\in U$, then the  order $|U|$ is bounded in terms of $m$.
\end{lemma}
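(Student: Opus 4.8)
The plan is to reduce to a faithful action on an elementary abelian group over a bounded prime field, bound the dimension of that module, and then observe that a bounded dimension together with a bounded prime forces $|U|$ to be bounded, since $U$ embeds into a product of general linear groups of bounded size.

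First I would carry out the reductions. As $U\leq\operatorname{Aut}(V)$ acts faithfully and coprimely on the abelian group $V$, Lemma~\ref{l-nakr}(c) gives $V=[V,U]\oplus C_V(U)$, and since $U$ acts trivially on $C_V(U)$ it still acts faithfully on $[V,U]$; moreover $[V,U]=[[V,U],U]$ by Lemma~\ref{l-nakr}(b) and $|[[V,U],u]|\le|[V,u]|\le m$. Hence we may assume $V=[V,U]$ and $C_V(U)=1$. Decomposing $V$ into its primary components $V=\bigoplus_p V_p$ (each characteristic, hence $U$-invariant, with $C_{V_p}(U)=1$) and passing to the Frattini quotients $\overline{V_p}=V_p/pV_p$, coprimeness (Lemma~\ref{l-nakr}(a), together with the fact that a coprime automorphism trivial on $V_p/pV_p$ is trivial on $V_p$) shows that $U$ acts faithfully on $\bigoplus_p\overline{V_p}$, with $C_{\overline{V_p}}(U)=0$ and $|[\overline{V_p},u]|\le m$ for each $u$. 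Writing $d=\lfloor\log_2 m\rfloor$, this means $\operatorname{rank}_{\F_p}(u-1)\le d$ on each $\overline{V_p}$; and if $\overline{V_p}\ne0$ then, since $C_{\overline{V_p}}(U)=0$, some $u$ acts nontrivially, forcing $p\le p^{\operatorname{rank}(u-1)}=|[\overline{V_p},u]|\le m$. Thus only the finitely many primes $p\le m$ occur, and it remains to bound $\dim_{\F_p}\overline{V_p}$ in terms of $m$ for each of them, for then $U\hookrightarrow\prod_{p\le m}\operatorname{GL}_{\dim\overline{V_p}}(\F_p)$ is a product of boundedly many groups of bounded order.

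Fix such a $p$ and write $W=\overline{V_p}$, which is a semisimple $\F_pU$-module by coprimeness. Decompose $W=\bigoplus_{i=1}^s W_i$ into irreducible (necessarily nontrivial, as $C_W(U)=0$) submodules. Since $u-1$ acts block-diagonally, $\operatorname{rank}(u-1)=\sum_i\operatorname{rank}(u-1\mid W_i)$, so $\sum_i\sum_{u\in U}\operatorname{rank}(u-1\mid W_i)\le d|U|$. For each $i$, letting $K_i$ be the kernel of the action on $W_i$, every $\bar u\ne1$ in $U/K_i$ acts nontrivially on the faithful irreducible module $W_i$, hence with $\operatorname{rank}(\bar u-1)\ge1$; this yields $\sum_{u\in U}\operatorname{rank}(u-1\mid W_i)\ge|U|-|K_i|\ge|U|/2$. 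Comparing the two inequalities gives $s\le2d$: the number of irreducible constituents is bounded.

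The remaining and most delicate point is to bound the dimension of a single irreducible constituent $W_i$, and this is where I expect the main difficulty. Over $\overline{\F_p}$ the module $W_i$ becomes a sum of $e_i$ Galois-conjugate absolutely irreducible modules $M$ of dimension $k_i$, with $\dim_{\F_p}W_i=e_ik_i$; since Galois conjugation preserves the rank of the rational operator $u-1$, we get $\operatorname{rank}_{\F_p}(u-1\mid W_i)=e_i\operatorname{rank}(u-1\mid M)$, so $e_i\le d$. To bound $k_i$ I would exploit coprimeness to lift $M$ to a faithful complex representation of $U/K_i$ with character $\chi$ of degree $k_i$ satisfying $\langle\chi,\chi\rangle=1$: each $u$ acts semisimply with the eigenvalue $1$ occurring with multiplicity $k_i-\operatorname{rank}(u-1\mid M)\ge k_i-d$, so $\chi(u)$ is a sum of at least $k_i-d$ ones and at most $d$ further roots of unity, giving $|\chi(u)|\ge k_i-2d$; column orthogonality $\frac1{|U/K_i|}\sum_u|\chi(u)|^2=1$ then forces $k_i\le2d+1$. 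Hence $\dim_{\F_p}W_i=e_ik_i\le d(2d+1)$ and $\dim_{\F_p}W=\sum_{i=1}^s\dim W_i\le2d\cdot d(2d+1)$, a bound depending only on $m$. Combined with $p\le m$ this bounds $|\operatorname{GL}_{\dim W}(\F_p)|$ in terms of $m$, and multiplying over the at most $\pi(m)$ relevant primes bounds $|U|$ in terms of $m$. The crux is this last paragraph: both that the cardinality hypothesis (not merely a rank hypothesis) pins the prime $p$ below $m$, and that the orthogonality/lifting argument controls the absolutely irreducible degree; without the former, $U$ could contain arbitrarily large cyclic groups of scalars.
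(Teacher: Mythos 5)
Your proposal is correct, but it takes a genuinely different route from the one this paper is built on. Note first that the paper does not prove Lemma~\ref{l33} at all: it is quoted from \cite[Lemma~2.4]{khu-shu204}. The natural in-paper comparison is therefore with the proof of its rank analogue, Lemma~\ref{l33r}, which is the paper's template for statements of this kind and is far more elementary than your argument: in the abelian case one greedily picks $u_1,u_2,\dots$ with $u_{j+1}\in C_U([V,u_1]\oplus\cdots\oplus[V,u_j])$, so that either the process stops after boundedly many steps, whence $U$ acts faithfully on a subgroup of bounded size and one quotes Lemma~\ref{l-r-coprime} (in the cardinality setting, simply a bound on the automorphism group of a bounded group), or else the single element $w=u_1u_2\cdots u_k$ satisfies $[V,w]=[V,u_1]\oplus\cdots\oplus[V,u_k]$ by Lemma~\ref{l-nakr}(b), contradicting the hypothesis on $[V,w]$ when $k$ is large; the general case then reduces to the abelian one via maximal normal abelian subgroups of Sylow subgroups together with Lemmas~\ref{l-gmh} and~\ref{l-kov}. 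Your argument instead runs through modular representation theory: Maschke semisimplicity, the counting bound $s\le 2d$ on the number of irreducible constituents, Galois descent giving $e_i\le d$, the Brauer lift to characteristic zero, and character orthogonality giving $k_i\le 2d+1$. I checked these steps and they are all sound, as are your preliminary reductions (to $C_V(U)=1$ and to the Frattini quotients, justified by Lemma~\ref{l-nakr} and the standard fact that a coprime automorphism trivial modulo the Frattini subgroup is trivial), and you correctly identify that the cardinality hypothesis is what pins the primes down to $p\le m$. What each approach buys: yours yields explicit dimension bounds, polynomial in $\log m$, and a clean structural picture of $V$ as a $U$-module, at the cost of character-theoretic machinery; the paper's greedy direct-sum argument is self-contained modulo coprime-action facts and, crucially, is robust enough to survive the passage from cardinality to rank (Lemma~\ref{l33r}), where your method must fail for exactly the reason you note --- scalar subgroups have rank one but unbounded order, so in the rank setting neither the prime nor $|U|$ can be bounded, only the rank of $U$.
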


Recall that the rank of a finite group is the minimum positive integer $r$ such that every subgroup of it can be generated by $r$ elements. The following result was obtained by Kov\'acs~\cite{kov} for
soluble groups, and extended independently by Guralnick \cite{gur} and Lucchini \cite{luc} using the classification of finite simple groups (improving a bound $2d$ of Longobardi and Maj \cite{lo-ma}).

\begin{lemma}\label{l-kov} If $d$ is the maximum of the ranks of
the Sylow subgroups of a~finite group, then the rank of this
group is at most~$d+1$.
\end{lemma}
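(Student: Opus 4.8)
The plan is to reduce the statement about ranks to a statement about the minimal number of generators $d(\,\cdot\,)$, and then to argue by induction on the group order, analysing a single chief factor at a time. Recall that the rank of $G$ equals $\max_{H\leqslant G} d(H)$. If every Sylow subgroup of $G$ has rank at most $d$, then so does every Sylow subgroup of every subgroup $H\leqslant G$, since a Sylow subgroup of $H$ lies inside a Sylow subgroup of $G$ and rank is inherited by subgroups. Hence it suffices to prove the assertion \emph{if all Sylow subgroups of a finite group $G$ have rank at most $d$, then $d(G)\leqslant d+1$}; applying this to each $H\leqslant G$ then bounds the rank of $G$ by $d+1$.

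I would induct on $|G|$, fixing a minimal normal subgroup $N$. The Sylow subgroups of $G/N$ are images of those of $G$, so their ranks are again at most $d$, and by induction $d(G/N)\leqslant d+1$. The task is to transfer this bound to $G$, and the principal tool is Gaschütz's theory of generation through a chief factor. If $N\leqslant \Phi(G)$ (the Frattini case), then $d(G)=d(G/N)\leqslant d+1$ and there is nothing more to do. The difficulty is concentrated in the non-Frattini case, where $N$ is complemented and the generator number may genuinely increase by one; the whole point is to show that such increases cannot accumulate beyond the single unit of slack the $+1$ permits, by charging each increase to a large section of a Sylow subgroup.

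When $N$ is abelian it is an irreducible module $V$ for $\bar G=G/C_G(N)$ over a finite field, and I would apply the quantitative form of Gaschütz's lemma: the generating number of $G$ is controlled by $d(G/N)$ together with the $V$-crown, that is, the direct sum $V^{s}$ of the $s$ complemented chief factors $G$-isomorphic to $V$. The crux is that $G$ then contains an elementary abelian $p$-section of rank $s\cdot\dim_{\mathbb{F}_p}V$, which embeds in a Sylow $p$-subgroup; comparing the number of extra generators forced by the crown against this section yields $d(G)\leqslant d+1$. This abelian case is exactly Kov\'acs's soluble result, and the module-counting inside the crown is where the real work lies. When $N$ is non-abelian it is a direct power $S^{k}$ of a non-abelian simple group $S$; here I would use that $S$ is $2$-generated and that $S^{k}$ contains an elementary abelian subgroup of rank at least $k$ (take a fixed prime-order element of $S$ in each factor), which forces $k\leqslant d$ and again bounds the generator increment in terms of $d$.

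The step I expect to be the main obstacle is precisely the non-Frattini case: showing that the total increment of $d(G)$ over the generation demand coming from the simple quotients stays within $1$. This needs the crown machinery of Gaschütz, together with the cohomological counting of complements, rather than a naive induction, since adding one generator at each minimal normal subgroup would only give the far weaker bound $d(G)\leqslant d+\,(\text{length of a chief series})$. Making the abelian module count interact correctly with the ranks of the Sylow subgroups is the heart of Kov\'acs's argument; the extension to non-abelian chief factors, where one must understand generation of the $G$-group $S^{k}$, is exactly the point at which the classification of finite simple groups enters in the work of Guralnick and Lucchini.
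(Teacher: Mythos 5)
The paper does not prove this lemma at all: it is quoted as a known result, due to Kov\'acs for soluble groups and to Guralnick and Lucchini (independently, using the classification of finite simple groups) in general. So the only question is whether your blind attempt actually constitutes a proof, and it does not. What you have written is a roadmap that correctly identifies the ingredients of the published proofs --- the reduction from rank to $d(\cdot)$, induction through a chief factor, the Frattini/non-Frattini dichotomy, Gasch\"utz's crown machinery for abelian chief factors, and $2$-generation of simple groups (a CFSG fact) for non-abelian ones --- but at both of the decisive points you state what must be shown rather than show it. In the abelian case, the assertion that comparing the crown $V^{s}$ against the elementary abelian $p$-section of rank $s\cdot\dim_{\mathbb{F}_p}V$ inside a Sylow $p$-subgroup ``yields $d(G)\leqslant d+1$'' is precisely Kov\'acs's theorem; no counting is actually performed, and it is exactly this count (how many complemented chief factors in a crown can coexist with all Sylow ranks being at most $d$, and how the needed generators sit across \emph{different} crowns for different modules $V$) that constitutes the proof.

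The non-abelian case is in worse shape. You conclude only that the non-abelian chief factor ``bounds the generator increment in terms of $d$,'' but that is strictly weaker than what the lemma requires: a per-factor increment bounded by any constant $\geqslant 1$ would, accumulated along a chief series, give only $d(G)\leqslant d+(\text{chief length})$ --- the trivial bound you yourself reject earlier. The entire content of the Guralnick--Lucchini theorem is that the \emph{total} excess of $d(G)$ over $d$, summed over the whole chief series (abelian and non-abelian factors together), is at most $1$; this requires knowing how generation of a monolithic group with socle $S^{k}$ interacts with generation of the quotient, which in turn rests on CFSG-dependent facts about simple groups well beyond $2$-generation (e.g.\ control of $\operatorname{Out}(S)$ and of the relevant permutation actions). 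Since neither crucial step is carried out, the proposal is a correct plan pointing at the literature, not a proof --- which is consistent with the fact that the paper itself treats the lemma as a citation rather than something one proves in passing.
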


The following lemma appeared independently and simultaneously in
the papers of Gorchakov~\cite{grc}, Merzlyakov~\cite{me}, and as
``P.~Hall's lemma" in the paper of Roseblade~\cite{rs}.

\begin{lemma}\label{l-gmh}
 Let $p$ be a~prime number. The rank of a~$p$-group of
automorphisms of an abelian
finite $p$-group of rank~$r$ is bounded in
terms of~$r$.
\end{lemma}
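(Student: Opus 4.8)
The plan is to split $P$ into the part acting on the Frattini quotient $A/pA$ and the part acting trivially there, bounding the rank of each. First I would set $\bar A=A/pA$, an elementary abelian $p$-group, and consider the natural homomorphism $P\to\operatorname{Aut}(\bar A)=GL_{r}(\F_p)$ (recall that for an abelian $p$-group $r=\dim_{\F_p}\bar A$). Its image is a $p$-subgroup of $GL_r(\F_p)$, hence conjugate into the group of upper unitriangular matrices, whose order is $p^{\binom{r}{2}}$; therefore the image has rank at most $\binom{r}{2}$. Writing $C=\ker(P\to\operatorname{Aut}(\bar A))$ and using the standard inequality $\operatorname{rank}(P)\le\operatorname{rank}(C)+\operatorname{rank}(P/C)$ for an extension, the problem reduces to bounding $\operatorname{rank}(C)$. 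Since $C$ lies in the first congruence subgroup $C_1=\{\f\in\operatorname{Aut}(A):(\f-1)A\subseteq pA\}$, and the rank of a subgroup never exceeds that of the ambient group, it suffices to bound $\operatorname{rank}(C_1)$.

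Next I would analyse the congruence filtration. Put $C_i=\{\f:(\f-1)A\subseteq p^iA\}$. Writing $\f=1+\delta$, $\psi=1+\epsilon$ with $\delta(A)\subseteq p^iA$ and $\epsilon(A)\subseteq p^jA$, one has $\delta\epsilon(A)\subseteq p^{i+j}A$, whence $[C_i,C_j]\subseteq C_{i+j}$ and $C_i^{p}\subseteq C_{i+1}$; moreover $1+\delta\mapsto\bar\delta$ embeds each quotient $C_i/C_{i+1}$ into $\operatorname{Hom}(A/pA,\,p^iA/p^{i+1}A)$, an elementary abelian group of rank at most $r^2$. The obvious idea of bounding $\operatorname{rank}(C_1)$ by summing the ranks of these layers breaks down, and this is the main obstacle: the number of nonzero layers is essentially the exponent of $A$, which is not bounded in terms of $r$. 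Already for $r=1$ one sees the phenomenon, since there $C_1$ is cyclic of order $p^{n-1}$ inside $(\mathbb{Z}/p^n)^{\ast}$, with $n-1$ layers but rank $1$; so the layers must be heavily glued by the multiplicative structure.

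The resolution is that this gluing makes $C_1$ powerful. For odd $p$ the displayed inclusions give $[C_1,C_1]\subseteq C_2$, while a routine collection argument (the $p$-power map is induced layerwise by multiplication by $p$ from $p^iA/p^{i+1}A$ onto $p^{i+1}A/p^{i+2}A$) gives $C_1^{p}=C_2$; hence $[C_1,C_1]\subseteq C_1^{p}$, so $C_1$ is powerful and $\Phi(C_1)=C_1^{p}[C_1,C_1]=C_2$. By the theory of powerful $p$-groups (Lubotzky--Mann) the rank of a powerful $p$-group equals its minimal number of generators, so $\operatorname{rank}(C_1)=\dim_{\F_p}C_1/\Phi(C_1)=\dim_{\F_p}C_1/C_2\le r^2$, a bound independent of the exponent of $A$. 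Combining the two parts yields $\operatorname{rank}(P)\le\binom{r}{2}+r^2$.

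Finally I would dispose of the prime $2$, where $C_1$ need not be powerful (one gets only $[C_1,C_1]\subseteq C_1^{2}$, not $[C_1,C_1]\subseteq C_1^{4}$). Here I would replace $C_1$ by $C_2$, which satisfies $[C_2,C_2]\subseteq C_4=C_2^{4}$ and is therefore powerful with $\operatorname{rank}(C_2)\le r^2$ by the same computation; the remaining quotient $P/(P\cap C_2)$ embeds into $\operatorname{Aut}(A/p^2A)$, and since $A/p^2A$ has order at most $p^{2r}$ this automorphism group has $r$-bounded order, hence $r$-bounded rank. Thus in all cases $\operatorname{rank}(P)$ is bounded by a function of $r$ alone. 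The crux throughout is the powerfulness step, where one shows that the unbounded-length congruence filtration collapses, as far as rank is concerned, onto its single top layer $C_1/C_2$ of order at most $p^{r^2}$; it is precisely the ring multiplication on $\operatorname{End}(A)$, and not merely the additive filtration, that forces this collapse.
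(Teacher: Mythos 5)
The paper itself offers no proof of this lemma (it is quoted from Gorchakov, Merzlyakov, and Roseblade), so your argument must stand on its own; unfortunately its crux, the claim that $C_1^p=C_2$ for odd $p$, is false once $A$ is not homocyclic. Take $p$ odd and $A=\mathbb{Z}/p\times\mathbb{Z}/p^3$ (so $r=2$), with generators $a$ of order $p$ and $e$ of order $p^3$. Here $C_1$ consists of the maps $(x,y)\mapsto(x,cx+dy)$ with $c\in p^2\mathbb{Z}/p^3$ and $d\equiv 1\pmod p$; one checks it is abelian, $C_1\cong\mathbb{Z}/p\times\mathbb{Z}/p^2$. Hence the set of $p$-th powers is already the subgroup $C_1^p=\{(x,y)\mapsto(x,dy):d\equiv 1\pmod{p^2}\}$ of order $p$, whereas $C_2=\{(x,y)\mapsto(x,cx+dy):c\in p^2\mathbb{Z}/p^3,\ d\equiv 1\pmod{p^2}\}$ has order $p^2$: the automorphism $(x,y)\mapsto(x,p^2x+y)$ lies in $C_2\setminus C_1^p$. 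What fails in your ``routine collection argument'' is not the surjectivity of multiplication by $p$ between the Hom groups (that part is true), but the fact that the embedding $C_i/C_{i+1}\hookrightarrow\operatorname{Hom}(A/pA,\,p^iA/p^{i+1}A)$ is in general not onto: a homomorphism $A/pA\to p^iA/p^{i+1}A$ lifts to an endomorphism $A\to p^iA$ only subject to element-order constraints (the map $\bar a\mapsto\overline{pe}$ admits no lift, since any $\delta(a)$ must have order at most $p$), and multiplication by $p$ need not carry the image of $C_i/C_{i+1}$ onto the image of $C_{i+1}/C_{i+2}$; in the example the former image is $1$-dimensional while the latter is $2$-dimensional. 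Your two test cases --- $r=1$, and implicitly the homocyclic case $\operatorname{Aut}(A)=GL_r(\mathbb{Z}/p^n)$ --- are precisely the cases where this obstruction vanishes, which is what made the claim look safe.

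This gap hits the proof at both points where the claim is used. First, powerfulness of $C_1$ is not established: you only have $[C_1,C_1]\subseteq C_2$ and $C_1^p\subseteq C_2$, which together prove nothing, and your argument leaves open whether $C_1$ is powerful at all. Second, and fatally even if powerfulness were granted, the rank count is wrong: powerfulness gives $\operatorname{rank}(C_1)=\dim_{\F_p}C_1/\Phi(C_1)$ with $\Phi(C_1)=C_1^p[C_1,C_1]$, and since $\Phi(C_1)$ can be strictly \emph{smaller} than $C_2$, the inequality you need, $\dim_{\F_p}C_1/\Phi(C_1)\le\dim_{\F_p}C_1/C_2\le r^2$, points the wrong way. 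Concretely, in the example above $\operatorname{rank}(C_1)=2$ while $\dim_{\F_p}C_1/C_2=1$, so the asserted equality $\operatorname{rank}(C_1)=\dim_{\F_p}C_1/C_2$ is false (the bound $2\le r^2$ happens to survive, but not by your argument). The $p=2$ patch inherits the same defect, since $C_2^4=C_4$ fails for the same reason. A genuine repair needs a different mechanism; for instance, every automorphism of $A$ lifts to an automorphism of the free module $\mathbb{Z}_p^r$ preserving $\ker(\mathbb{Z}_p^r\to A)$, so $\operatorname{Aut}(A)$ is a quotient of a closed subgroup of $GL_r(\mathbb{Z}_p)$, where the congruence subgroups really are uniformly powerful and the rank is $r$-bounded; rank then passes to subgroups and quotients. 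As written, however, the proof has a genuine gap.
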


The next two lemmas are also well-known facts.

\begin{lemma}[{see, for example, \cite[Lemma~1.3]{khu-shu18a}}] \label{l-r-coprime}
 A~finite $p'$-group of linear
transformations of a vector space of dimension $n$ over a field of
characteristic $p$ has $n$-bounded rank.
\end{lemma}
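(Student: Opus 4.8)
The plan is to reduce, via the two rank lemmas already at hand, to a statement about a single abelian subgroup and its automorphisms. First I would replace $F$ by its algebraic closure $\overline{F}$: extending scalars to $\overline{F}\otimes_F V$ preserves the dimension $n$ and embeds the group into $GL_n(\overline F)$ while leaving it unchanged as an abstract group, so its rank is unaffected. By Lemma~\ref{l-kov} the rank of $G$ is at most one more than the maximum of the ranks of its Sylow subgroups; since $G$ is a $p'$-group, each Sylow subgroup is a $q$-group for some prime $q\neq p$. Hence it suffices to bound, in terms of $n$, the rank of an arbitrary $q$-subgroup $Q\leq GL_n(\overline F)$ with $q\neq p$.

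Fix such a $Q$ and let $A$ be a maximal normal abelian subgroup of $Q$. The first key point is that $\operatorname{rank}(A)\leq n$. Since $A$ is an abelian $p'$-group, every element of $A$ is diagonalizable over $\overline F$ (its order is prime to the characteristic, so its minimal polynomial divides the separable polynomial $X^{|a|}-1$), and a commuting family of diagonalizable operators is simultaneously diagonalizable; thus $A$ embeds into the diagonal torus $(\overline F^{\,*})^n$. As a finite subgroup of $\overline F^{\,*}$ is cyclic, $A$ embeds into a product of $n$ cyclic groups, and since rank is monotone under passage to subgroups this gives $\operatorname{rank}(A)\leq n$. The second key point is that $C_Q(A)=A$: this is the standard fact that in a nilpotent group a maximal normal abelian subgroup is self-centralizing (if $C_Q(A)$ properly contained $A$, a nontrivial element of $(C_Q(A)/A)\cap Z(Q/A)$ would enlarge $A$ to a strictly larger normal abelian subgroup). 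Consequently $Q/A=Q/C_Q(A)$ embeds into $\operatorname{Aut}(A)$, and its image is a $q$-group of automorphisms of the abelian $q$-group $A$ of rank at most $n$, so Lemma~\ref{l-gmh} bounds $\operatorname{rank}(Q/A)$ in terms of $n$.

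Finally I would combine these through the elementary inequality $\operatorname{rank}(Q)\leq\operatorname{rank}(A)+\operatorname{rank}(Q/A)$ (for any $H\leq Q$, the subgroup $H\cap A$ together with a lift of $H/(H\cap A)\cong HA/A$ generate $H$), obtaining an $n$-bounded rank for $Q$ and hence, via Lemma~\ref{l-kov}, for $G$. I expect the only genuinely delicate step to be the reduction to the abelian subgroup $A$: one must check that diagonalizability really yields $\operatorname{rank}(A)\leq n$ rather than merely bounding the number of distinct characters, and that $A$ is self-centralizing, after which Lemma~\ref{l-gmh} does the essential work of controlling the non-abelian part $Q/A$. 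Note that no appeal to irreducibility or Clifford theory is needed, precisely because $A$ can be diagonalized on the whole of $\overline F^{\,n}$ simultaneously.
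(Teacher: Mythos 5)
Your proof is correct and is essentially the standard argument for this fact: the paper itself does not prove the lemma but quotes it from \cite[Lemma~1.3]{khu-shu18a}, and your scheme---reduction to Sylow $q$-subgroups via Lemma~\ref{l-kov}, simultaneous diagonalization of a maximal normal abelian subgroup $A$ to get $\operatorname{rank}(A)\leq n$, the self-centralizing property $C_Q(A)=A$, and Lemma~\ref{l-gmh} applied to $Q/A\hookrightarrow\operatorname{Aut}(A)$---is precisely the technique of the cited source, and it is also the pattern the paper itself uses in the proof of Lemma~\ref{l33r}. All the individual steps (separability of $X^{|a|}-1$, rank monotonicity for the paper's notion of rank, the extension inequality $\operatorname{rank}(Q)\leq\operatorname{rank}(A)+\operatorname{rank}(Q/A)$) check out.
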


\begin{lemma}\label{l-exp}
A group of rank $r$ and of exponent $e$ has $(r,e)$-bounded order.
\end{lemma}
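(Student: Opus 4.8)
The plan is to first reduce to the case of a $p$-group and then bound the order of each Sylow subgroup using the theory of powerful $p$-groups. Since $G$ has exponent $e$, Cauchy's theorem shows that a prime $p$ divides $|G|$ if and only if it divides $e$; in particular the number of distinct prime divisors of $|G|$ is at most $\log_2 e$, hence $e$-bounded. Writing $|G|=\prod_{p\mid e}|G_p|$ for Sylow subgroups $G_p$, and noting that each $G_p$ has rank at most $r$ (being a subgroup of $G$) and exponent dividing $e$, it suffices to bound $|G_p|$ in terms of $r$ and $e$ for a finite $p$-group of rank at most $r$ and exponent $p^k$ with $p^k\le e$.

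For the $p$-group $P=G_p$, I would invoke the Lubotzky--Mann theory of powerful $p$-groups. First, $P$ contains a powerful characteristic subgroup $Q$ whose index $|P:Q|$ is bounded in terms of $r$ and $p$, and therefore in terms of $r$ and $e$ since $p\le e$. Second, a powerful $p$-group of rank at most $r$ is generated by at most $r$ elements and can be written as a product of at most $r$ cyclic subgroups, each of order at most $\exp(P)=p^k\le e$; hence $|Q|\le e^{r}$. Combining the two estimates gives $|P|=|P:Q|\,|Q|$ bounded in terms of $r$ and $e$, and then, via the product $|G|=\prod_{p\mid e}|G_p|$ over an $e$-bounded set of primes, the order $|G|$ is $(r,e)$-bounded as well.

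The main obstacle is the $p$-group estimate, and specifically the passage to a powerful subgroup of bounded index: the naive elementary approaches fail, as one sees by trying to bound the Frattini length --- extraspecial groups of exponent $p$ already have Frattini length $2$, so Frattini length is not controlled by the exponent, and in general the nilpotency class is not controlled by the rank alone. Thus one genuinely needs the structural input that every finite $p$-group of rank $r$ is close (up to $(r,p)$-bounded index) to a powerful one, together with the fact that powerful $p$-groups of bounded rank are products of boundedly many cyclic groups. Once these two facts are in hand, the remaining bookkeeping --- the count of prime divisors and the assembly of $|G|$ from the $|G_p|$ --- is routine.
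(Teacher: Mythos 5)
Your proof is correct and takes exactly the route the paper itself indicates: the paper's entire ``proof'' is a parenthetical remark saying the lemma follows from bounds on the orders of the Sylow subgroups, obtained via powerful $p$-groups, which is precisely your reduction (Cauchy's theorem bounding the set of relevant primes, $|G|=\prod_p|G_p|$) combined with the Lubotzky--Mann facts (a characteristic powerful subgroup of $(r,p)$-bounded index, and a powerful subgroup of rank $\le r$ being a product of $\le r$ cyclic groups of order $\le e$). Your write-up simply fills in the details of that same sketch, and does so correctly.
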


(The proof of this lemma does not have to use the positive solution of the Restricted Burnside Problem, as it follows from the bounds for the orders of Sylow subgroups, which can be obtained by using powerful $p$-groups.)

We now prove a rank analogue of Lemma~\ref{l33}.

  \begin{lemma}\label{l33r}
Let $V$ be an elementary abelian  finite $p$-group,  $U$ a group of coprime
automorphisms of $V$, and $r$ a positive integer.  If the rank of $[V,u]$ is at most $r$ for every $u\in U$, then
the rank of $U$ is bounded in terms of $r$.
\end{lemma}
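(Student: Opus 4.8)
The plan is to translate everything into module language and then reduce, via the listed structural lemmas, to a single counting argument. Regard $V$ as a vector space over $\F_p$, so that $U\le GL(V)$ acts faithfully and, writing each $u\in U$ as a linear transformation, the hypothesis becomes $\dim_{\F_p}[V,u]=\dim_{\F_p}\operatorname{Im}(u-1)\le r$ for every $u\in U$ (the rank of the elementary abelian $p$-group $[V,u]$ is just its dimension). Since $U$ is a $p'$-group, Lemma~\ref{l-kov} reduces the task to bounding the rank of an arbitrary Sylow $q$-subgroup $Q$ of $U$ (necessarily $q\ne p$); every such $Q$ still acts faithfully on $V$ and inherits the hypothesis.

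The heart of the argument is a bound on the ranks of elementary abelian subgroups. Let $E\le U$ be elementary abelian of rank $k$; I would extend scalars to $\bar\F_p$ and diagonalize the (coprime, abelian) action, writing $V\otimes\bar\F_p=\bigoplus_{\lambda\in\widehat E}V_\lambda$ over the characters $\lambda\colon E\to\bar\F_p^{\times}$. Faithfulness means that the set $\Lambda$ of nontrivial characters actually occurring generates $\widehat E$, so $|\Lambda|\ge k$; and on $V_\lambda$ the operator $u-1$ acts as the scalar $\lambda(u)-1$, giving $\dim[V,u]=\sum_{\lambda(u)\ne1}\dim V_\lambda\ge|\{\lambda\in\Lambda:\lambda(u)\ne1\}|$. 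Identifying $\widehat E\cong\F_q^{\,k}$ through a perfect pairing, the set $\{\lambda:\lambda(u)=1\}$ is exactly the hyperplane $u^{\perp}$, so the hypothesis says that every hyperplane of $\widehat E$ omits at most $r$ points of $\Lambda$. Double counting the incidences via $\sum_{H}|\Lambda\setminus H|=|\Lambda|\,q^{k-1}\le r\,(q^{k}-1)/(q-1)$ then forces $|\Lambda|<\tfrac{q}{q-1}\,r\le 2r$, whence $k\le|\Lambda|\le 2r$. I expect this counting step to be the main obstacle, in that it is the only place where the hypothesis is genuinely exploited; everything else is assembly.

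It follows at once that every abelian subgroup $A\le U$ has rank at most $2r$, since its rank equals that of the elementary abelian subgroup $\Omega_1(A)$. To finish with a Sylow subgroup $Q$, I would take a maximal normal abelian subgroup $A\trianglelefteq Q$; as $A$ is self-centralizing in the $q$-group $Q$ (the standard fact $C_Q(A)=A$), the quotient $Q/A$ embeds in $\operatorname{Aut}(A)$ as a $q$-group of automorphisms of the abelian $q$-group $A$ of rank $\le 2r$. Lemma~\ref{l-gmh} then bounds the rank of $Q/A$ in terms of $r$, and the subadditivity $\operatorname{rank}(Q)\le\operatorname{rank}(A)+\operatorname{rank}(Q/A)$ bounds $\operatorname{rank}(Q)$. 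Feeding this into Lemma~\ref{l-kov} bounds $\operatorname{rank}(U)$ in terms of $r$, as required.
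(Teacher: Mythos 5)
Your proof is correct, but the heart of it differs genuinely from the paper's. Both arguments share the same outer skeleton: reduce to Sylow subgroups via Lemma~\ref{l-kov}, take a maximal normal abelian subgroup $M$ of a Sylow $q$-subgroup $Q$, use $C_Q(M)=M$ so that $Q/M$ embeds in $\operatorname{Aut}(M)$, and invoke Lemma~\ref{l-gmh}. The difference is in the core step of bounding abelian subgroups. The paper works with an arbitrary abelian subgroup $U$ directly and runs a greedy construction: using the coprime decomposition $V=[V,u_1]\oplus C_V(u_1)$ of Lemma~\ref{l-nakr}(c), it picks successive elements $u_{i+1}\in C_U([V,u_1]\oplus\dots\oplus[V,u_i])$; if more than $r$ steps were possible, the product $w=u_1\cdots u_k$ would satisfy $[V,w]=[V,u_1]\oplus\dots\oplus[V,u_k]$ (via Lemma~\ref{l-nakr}(b)), contradicting $\dim[V,w]\le r$. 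Hence the process stops within $r$ steps, $U$ acts faithfully on a subspace of dimension at most $r^2$, and Lemma~\ref{l-r-coprime} bounds its rank. You instead bound only elementary abelian subgroups $E$ (which suffices, via $\Omega_1$), by extending scalars to $\bar\F_p$, diagonalizing, and double-counting incidences between the occurring nontrivial weights and the hyperplanes $u^{\perp}$ of $\widehat E$; this yields the explicit bound $\operatorname{rank}(E)<2r$ and needs neither Lemma~\ref{l-r-coprime} nor Lemma~\ref{l-nakr}. Your route buys a sharp, explicit constant and is pure linear algebra over the algebraic closure; the paper's route stays entirely within the coprime-action toolkit it has already assembled, avoids field extensions and character/weight decompositions, and gives the slightly stronger conclusion that the whole abelian subgroup acts faithfully on an $r^2$-dimensional subspace. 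Both are complete proofs of the lemma.
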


\begin{proof}
We regard $V$ as a vector space over $\F_p$ in additive notation; then the rank of a~subgroup is its dimension. 
  First suppose that $U$ is abelian. Pick $u_1\in U$ such that $[V,u_1]\ne 0$. By Lemma~\ref{l-nakr}(c), $V= [V,u_1]\oplus  C_V(u_1)$, and both summands are $U$-invariant, since $U$ is abelian. If $C_U([V,u_1])=1$, then the rank of $U$ is $r$-bounded by Lemma~\ref{l-r-coprime}.
  Otherwise pick $1\ne u_2\in C_U([V,u_1])$; then $V= [V,u_1] \oplus [V,u_2] \oplus C_V(\langle u_1,u_2\rangle )$. If $1\ne u_3\in C_U([V,u_1]\oplus [V,u_2])$, then $V= [V,u_1]\oplus [V,u_2]\oplus [V,u_3] \oplus C_V(\langle u_1,u_2,u_3\rangle )$, and so on. If $C_U([V,u_1]\oplus \dots \oplus [V,u_k])=1$ at some $r$-bounded step $k$, then $U$ has $r$-bounded rank by Lemma~\ref{l-r-coprime}. However, if there are too many steps, then
for the element $w=u_1u_2\cdots u_k$ with $k>r$ by Lemma~\ref{l-nakr}(b) we shall have $0\ne [V,u_i]= [[V,u_i],w]$ for every $i=1,2,\dots,k$, so that $[V,w] = [V,u_1]\oplus \dots \oplus [V,u_k]$ will have rank greater than $r$, a contradiction.

We now consider the general case.  If $Q$ is a Sylow $q$-subgroup of $U$, let $M$ be a maximal normal abelian subgroup of $Q$. By the above, the rank of  $M$ is $r$-bounded. Hence $Q$ has $r$-bounded rank by Lemma~\ref{l-gmh}, since $C_P(M)= M$ and $P/M$ embeds in the automorphism group of $M$.  Since the rank of a Sylow $q$-subgroup of $U$ is $r$-bounded for every prime divisor $q$ of $|U|$, the rank of $U$ is $r$-bounded by Lemma~\ref{l-kov}.
\end{proof}

  We now recall some elementary properties of Engel sinks.   The following lemma is a consequence of the properties of coprime actions and a formula in H.~Heineken's paper~\cite{hei} on a connection between left and right Engel commutators.

\begin{lemma}[{\cite[Lemma~3.2]{khu-shu19}}]\label{l-sink}
 If $V$ is an abelian subgroup of a finite group $G$, and $g\in G$ an element normalizing $V$  such that $(|V|,|g|)=1$, then
 $$[V,g]={\mathscr E}_{V\langle g\rangle}(g)={\mathscr R}_{V\langle g\rangle}(g).$$
\end{lemma}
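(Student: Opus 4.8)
The plan is to work inside $H=V\langle g\rangle$, where $V$ is normal (as $g$ normalizes it) and abelian, and to exploit the coprimeness through Fitting's decomposition. Writing $V$ additively and letting $\sigma$ denote the automorphism of $V$ induced by conjugation by $g$, Lemma~\ref{l-nakr}(c) gives $V=[V,g]\oplus C_V(g)$, and coprimeness makes $\sigma$ act semisimply, so that on $W:=[V,g]$ the operator $\sigma-1$ is invertible (equivalently, $C_W(g)=1$ and $[W,g]=W$ by Lemma~\ref{l-nakr}(b)). The whole argument reduces to the observation that, after a single commutation, both the left and the right Engel commutators with $g$ become iterates of linear operators acting on the finite group $W$, and the smallest sink is exactly the set of recurrent values of these iterations.

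For the left sinks I would take an arbitrary $x\in H$, write $x=vg^{i}$ with $v\in V$, and use the commutator identity $[vg^{i},g]=[v,g]^{g^{i}}$ to see that $[x,g]\in W$, since conjugation by $g^{i}$ preserves the $g$-invariant subgroup $W$. As $V$ is abelian, for any $w\in V$ one has $[w,g]=(\sigma-1)w$, so all subsequent commutators stay in $W$ and merely apply $\sigma-1$; hence $[x,\,{}_{k}g]\in W$ for every $k\ge 1$, which shows that $W$ is a left Engel sink of $g$ in $H$. For the reverse inclusion, since $\sigma-1$ is a bijection of the finite set $W$, every prescribed $w_{0}\in W$ equals $(\sigma-1)^{k}w$ for a suitable $w\in W$ and each $k$; taking $x=w$ realizes $w_{0}=[w,\,{}_{k}g]$ for all $k$, so $w_{0}$ must lie in the smallest left Engel sink. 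This yields ${\mathscr E}_{H}(g)=W=[V,g]$.

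For the right sinks the computation is analogous but needs more care. Using $[g,vg^{i}]=[g,v]^{g^{i}}$ one gets $[g,x]\in W$, and for $w\in V$ the abelianness of $V$ gives $[w,x]=[w,g^{i}]=(\sigma^{i}-1)w$, which again preserves $W$; hence $[g,\,{}_{k}x]\in W$ for all $k$, so $W$ is a right Engel sink. The delicate point, and the main obstacle, is the reverse inclusion, because the iteration operator $\sigma^{i}-1$ need \emph{not} be invertible for a general exponent $i$ (it vanishes when $i$ is a multiple of the order of $\sigma$), so not every $x$ realizes all of $W$. I would circumvent this by restricting to elements of the form $x=vg$ (that is, $i=1$): then the iteration operator is $\sigma-1$, invertible on $W$, while $[g,x]=[g,v]^{g}=\sigma(1-\sigma)v$ ranges over all of $W$ as $v$ ranges over $V$. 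Prescribing $w_{0}\in W$ and choosing $v$ with $[g,x]=w_{0}$, the sequence $[g,\,{}_{k}x]=(\sigma-1)^{k-1}w_{0}$ lies on the cycle of the permutation $\sigma-1$ through $w_{0}$, so $w_{0}$ recurs infinitely often and therefore belongs to the smallest right Engel sink. Hence ${\mathscr R}_{H}(g)=W=[V,g]$, completing the proof. Throughout, the only genuinely non-formal ingredients are Fitting's lemma in the coprime setting and the permutation property of an invertible operator on a finite abelian group; the alternative route, via Heineken's identity relating left- and right-normed Engel commutators, would instead let one deduce the right-hand equality directly from the left-hand one, but the computation above seems the most self-contained.
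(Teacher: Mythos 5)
Your proof is correct, but note that the paper itself does not prove this lemma: it is quoted from \cite[Lemma~3.2]{khu-shu19}, and the paper only indicates the method, namely ``properties of coprime actions and a formula in H.~Heineken's paper \cite{hei} on a connection between left and right Engel commutators.'' Your argument shares the coprime-action ingredients (the decomposition $V=[V,g]\oplus C_V(g)$ from Lemma~\ref{l-nakr}(c) and $[[V,g],g]=[V,g]$ from Lemma~\ref{l-nakr}(b), so that $\sigma-1$ is a bijection of $W=[V,g]$), but it genuinely diverges on the right-Engel half: instead of deducing ${\mathscr R}_{V\langle g\rangle}(g)=[V,g]$ from the left-sink equality via Heineken's identity, you compute the right Engel commutators directly, and you deal with the real obstacle --- that the iteration operator $\sigma^i-1$ attached to a general element $x=vg^i$ need not be invertible --- by choosing witnesses of the form $x=vg$, for which the operator is again the invertible $\sigma-1$ and $[g,x]=\sigma(1-\sigma)v$ sweeps all of $W$ as $v$ ranges over $V$. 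What this buys is self-containedness (no appeal to Heineken's formula), at the cost of the extra computation; the cited route gets the right-sink statement essentially for free once the left-sink statement and the identity are in hand.

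One phrasing slip in your left-sink argument deserves correction: you write ``taking $x=w$ realizes $w_0=[w,\,{}_{k}g]$ for all $k$,'' but the element $w$ with $(\sigma-1)^k w=w_0$ depends on $k$; a single $w$ working for all $k$ would force $(\sigma-1)w_0=w_0$, which is false in general. The conclusion is nevertheless immediate by the same recurrence argument you use for the right sink: since $(\sigma-1)|_W$ is a permutation of the finite set $W$, it has finite order $N$, so $[w_0,\,{}_{kN}\,g]=w_0$ for all $k\geq 1$; thus $w_0$ recurs infinitely often in its own trajectory and must lie in every left Engel sink of $g$ in $V\langle g\rangle$.
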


\begin{remark}\label{r-inh}
Since the image of a group commutator $[a,{}_kb]$ under a homomorphism $\vartheta$ is the commutator $[a^{\vartheta},{}_kb^\vartheta]$  of the images, for any element $g$ of a group $G$ and any normal subgroup $N$ the image of a  left (right) Engel sink $ \mathscr E(g)$ (respectively, $\mathscr R(g)$) in $G/N$ is a   left (right) Engel sink of the image of $g$. Furthermore,
 for any  $A$-invariant section $M/N$, that is, a quotient  of an  $A$-invariant  subgroup $M$ by an  $A$-invariant subgroup $N$  normal in $M$, the centralizer of $A$ in $M/N$ is the image of the centralizer $C_M(A)$ by Lemma~\ref{l-nakr}. Therefore the hypotheses of Theorem~\ref{t-o} are inherited by any  $A$-invariant section. We shall freely use this fact without special references.
\end{remark}

\section{Sinks of bounded cardinality}\label{s-sol-card}

First we consider a soluble group $G$ of Fitting height 2 satisfying the hypotheses of Theorem~\ref{t-o}.

\begin{prop}\label{pr-h2}
Let $G$ be a finite group admitting a group of automorphisms $A$ of order coprime to $|G|$. Let $m$ be a positive integer such that either
\begin{itemize}
  \item[\rm (a)] every element $g\in C_G(A)$ has a left Engel sink ${\mathscr E}_G(g)$ of cardinality at most $m$, or
  \item[\rm (b)] every element $g\in C_G(A)$ has a right Engel sink ${\mathscr R}_G(g)$ of cardinality at most $m$.
\end{itemize}
If  $G=F_2(G)$, then the centralizer $C_{G/F(G)}(A)$ of $A$ in $G/F(G)$ has  $m$-bounded order.
\end{prop}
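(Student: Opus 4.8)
The plan is to bound $|C_{G/F}(A)|$, where $F=F(G)$, by reducing every relevant action to a coprime one and then applying Lemmas~\ref{l-sink} and~\ref{l33}. By Lemma~\ref{l-nakr}(a) we have $C_{G/F}(A)=C_G(A)F/F=:\overline C$, a nilpotent subgroup of $G/F$; write $\overline C=\prod_q\overline C_q$ for its Sylow decomposition. Since $G$ is soluble of Fitting height $2$, $C_G(F)=Z(F)\le F$, so $\overline C$ acts faithfully on $F$. The engine is the following: for any $A$-invariant abelian section $V$ of $F$ and any $g\in C_G(A)$ with $(|V|,|g|)=1$, Lemma~\ref{l-sink} identifies $[V,g]$ with ${\mathscr E}_{V\langle g\rangle}(g)={\mathscr R}_{V\langle g\rangle}(g)$; as a sink taken in a subgroup (or, via Remark~\ref{r-inh}, in a section) is contained in one taken in $G$, this yields $|[V,g]|\le m$ in both cases (a) and (b) at once, and feeding this into Lemma~\ref{l33} bounds the order of the coprime group acting on $V$. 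The whole difficulty is therefore to make each action coprime: the sink hypothesis is vacuous for a $q$-element acting on a $q$-group, since there $V\langle g\rangle$ is a $q$-group, hence nilpotent, and the sink is trivial.

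First I would dispose of all primes $q\nmid|F|$ simultaneously. Their contribution lies in the Hall $\pi'$-subgroup $D:=O_{\pi'}(\overline C)$, with $\pi=\pi(|F|)$, and $D$ acts coprimely on $V:=F/\Phi(F)=\bigoplus_pF_p/\Phi(F_p)$. Every $u\in D$ lifts to a $\pi'$-element $g\in C_G(A)$, so $|[V,u]|=|[V,g]|\le m$, and Lemma~\ref{l33} bounds $|D|$ in terms of $m$; in particular only $m$-boundedly many primes divide $|D|$.

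The heart of the matter is a prime $q$ dividing $|F|$, where I would prove the key claim $C_{\overline C_q}(F_{q'})=1$ for $F_{q'}=\prod_{\ell\ne q}F_\ell$; this is where $F=F(G)$ is used decisively. Let $X\trianglelefteq G$ be the preimage of $O_q(G/F)=(G/F)_q$. Then $X/F_{q'}$ is an extension of the $q$-group $F/F_{q'}\cong F_q$ by the $q$-group $X/F$, hence a $q$-group, so $X=F_{q'}\rtimes S$ is $q$-nilpotent with Sylow $q$-subgroup $S$. Any $q$-element $g\in C_G(A)$ with $gF\in\overline C_q$ lies in $X$, and if it centralizes $F_{q'}$ it lies in $Y:=C_X(F_{q'})$. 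A short computation (a coprime $q$-automorphism of $F_{q'}$ that is inner is trivial) shows $Y=Z(F_{q'})\times C_S(F_{q'})$ is nilpotent; since $Y=C_G(F_{q'})\cap X$ is normal in $G$, we get $Y\le F(G)=F$, so $g\in F$ and the claim follows. Consequently $\overline C_q$ acts faithfully and coprimely on $V_{q'}=F_{q'}/\Phi(F_{q'})$, and applying Lemma~\ref{l-sink} to $q$-element lifts together with Lemma~\ref{l33} bounds $|\overline C_q|$ in terms of $m$.

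It then remains to bound the number of primes $q\mid|F|$ with $\overline C_q\ne1$ and to assemble. For such $q$ the faithfulness just proved forces $\overline C_q$ to act nontrivially on some $W_p:=F_p/\Phi(F_p)$ with $p\ne q$; hence the nontrivial $q$-group image of $\overline C_q$ sits inside the image of $O_{p'}(\overline C)$ in $\operatorname{Aut}(W_p)$, whose order is $m$-bounded by Lemma~\ref{l33}. Thus $q$ divides an $m$-bounded number, so both $q$ and the number of such primes are $m$-bounded. Combining with the previous paragraph, $|\overline C|=\prod_q|\overline C_q|$ is a product of $m$-boundedly many factors, each of $m$-bounded order, hence $m$-bounded. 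The main obstacle I anticipate is precisely the faithfulness claim $C_{\overline C_q}(F_{q'})=1$: it is what converts the genuinely non-coprime action on $F_q$ (about which the Engel-sink hypothesis is silent) into a coprime action on $F_{q'}$, and it is the only step requiring the full Fitting-subgroup hypothesis rather than mere solubility.
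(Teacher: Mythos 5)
Your proof is correct, and although it runs on the same engine as the paper's---lift elements of $C_{G/F}(A)$ to coprime elements of $C_G(A)$, identify the commutator subgroup $[V,g]$ with an Engel sink via Lemma~\ref{l-sink} so that its order is at most $m$ under either hypothesis (a) or (b), then bound the acting group by Lemma~\ref{l33}, the crux being faithfulness of the action of each Sylow subgroup of $C_{G/F}(A)$ on the Hall $q'$-part of $F$---your implementation is genuinely different in two respects. First, the paper begins by applying Gasch\"utz's theorem to reduce to $\Phi(F)=1$, which makes $F$ abelian (a product of minimal normal subgroups), so that all conjugation actions of $G/F$ on subgroups of $F$ are manifestly well defined and no Frattini quotients are needed; you skip this reduction and work with $F_{q'}/\Phi(F_{q'})$ instead, which is legitimate but tacitly uses two standard facts you should state explicitly: inner automorphisms of the nilpotent group $F_{q'}$ are trivial on $F_{q'}/\Phi(F_{q'})$ (this is what makes the action of $\overline C_q$ there well defined, and likewise what lets $D$ embed in, rather than merely map to, $\operatorname{Aut}(F/\Phi(F))$ so that Lemma~\ref{l33} bounds $|D|$ itself), and a coprime automorphism trivial on $P/\Phi(P)$ is trivial on $P$. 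Second, and more substantively, your faithfulness argument differs from the paper's: the paper shows that a $q$-element $\hat g\in C_G(A)$ centralizing $F_{q'}$ would centralize every factor of a principal series through $F_{q'}$ and $F$ (using nilpotency of $G/F$), hence lie in $F$ by \cite[5.2.9]{rob}, and it does this uniformly for all primes $q$ without distinguishing whether $q$ divides $|F|$; you instead show that $Y=C_X(F_{q'})$, for $X$ the preimage of $O_q(G/F)$, is a nilpotent normal subgroup of $G$ and hence lies in $F(G)=F$---a clean alternative that exploits the hypothesis $F(G)=F$ more directly. Two minor remarks: your opening assertion that $\overline C$ ``acts faithfully on $F$'' is not literally meaningful when $F$ is nonabelian (the conjugation action of $G/F$ on $F$ is not well defined; only the actions on Frattini quotients are), though nothing later depends on it; and your closing detour through the image of $O_{p'}(\overline C)$ in $\operatorname{Aut}(W_p)$, $W_p=F_p/\Phi(F_p)$, to bound the primes $q\mid|F|$ is unnecessary---once $\overline C_q\ne1$ is known to have $m$-bounded order, the inequality $q\le|\overline C_q|$ already bounds $q$, which is how the paper concludes.
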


\begin{proof}
To lighten the notation, for the rest of the proof we write $F=F(G)$.
 By Gasch\"utz's theorem  \cite[Satz~III.4.2]{hup} the image of the Fitting subgroup $F$ in the quotient  of $G$ by the  Frattini subgroup $\Phi(F)$ of $F$ is the Fitting subgroup of the quotient $G/\Phi(F)$. Since $|C_{(G/\Phi(F))/(F/\Phi(F))}(A)|=|C_{G/F}(A)|$ by Lemma~\ref{l-nakr} and the hypothesis is inherited by the quotient $G/\Phi(F)$ by Remark~\ref{r-inh}, we can assume that  $\Phi(F)=1$. Then $F$ is a direct product of minimal normal subgroups of $G$ by  Gasch\"utz's theorem  \cite[Satz~III.4.5]{hup}, and therefore $F$ is a direct product of
elementary abelian $p$-groups for various~$p$.

Let $q$ be any prime dividing $|C_{G/F}(A)|$, and let $Q$ be a Sylow $q$-subgroup of $C_{G/F}(A)$. Consider any non-trivial element $g\in Q$, and let $\hat g$ be a pre-image of $g$ in $C_G(A)$, which exists by Lemma~\ref{l-nakr}. Since the image of the Sylow $q$-subgroup of $\langle\hat g\rangle$ in $G/F$ contains $g$, we can choose $\hat g$ to be also a $q$-element.

 Then $\hat g$ induces by conjugation a non-trivial coprime automorphism of the Hall $q'$-subgroup $F_{q'}$ of $F$. If we suppose the opposite, that $\hat g$ centralizes $F_{q'}$, then $\hat g$ centralizes all factors of a principal series of $G$. Indeed, such a series can be chosen to contain $F_{q'}$ and $F$. The element $\hat g$,  being a $q$-element, centralizes the principal  $q$-factors in $F/F_{q'}$; furthermore, $\hat g$ centralizes all factors of this series above $F$, since $G/F$ is nilpotent by hypothesis. As a result, then $\hat g$  would belong to $F$ by~\cite[5.2.9]{rob}, a~contradiction.

Defining   the action of $g\in Q$  on $F_{q'}$ as induced by the conjugation by an element $\hat g$ chosen as above, we obtain a well-defined faithful action of $Q$ by coprime automorphisms  on $F_{q'}$, because $\langle\hat g\rangle\cap F$ is contained in the Sylow $q$-subgroup of $F$ and therefore centralizes $F_{q'}$.
 We have $[F_{q'},\hat g]\subseteq \mathscr E(\hat g)\cap \mathscr R(\hat g)$ by Lemma~\ref{l-sink}. Hence, under any of the hypotheses (a) or (b), the subgroup  $[F_{q'},\hat g]=[F_{q'},  g]$ has $m$-bounded order for any $g\in Q$. Then $|Q|$ is also $m$-bounded by Lemma~\ref{l33}. In particular, the prime $q$ is bounded above in terms of $m$, so there are only $m$-boundedly many primes dividing  $|C_{G/F}(\varphi)|$. Since the order of $C_{G/F}(\varphi)$ is the product of the orders of its Sylow subgroups, each of which has $m$-bounded order,  we obtain that  $|C_{G/F}(\varphi)|$ is  bounded in terms of $m$.
\end{proof}

\begin{proof}[Proof of Theorem~\ref{t-o}]
Recall that  $G$ is a finite group admitting a soluble group of automorphisms $A$ of order coprime to $|G|$, and $m$ is a positive integer such that either
\begin{itemize}
  \item[\rm (a)] every element $g\in C_G(A)$ has a left Engel sink ${\mathscr E}_G(g)$ of cardinality at most $m$, or
  \item[\rm (b)] every element $g\in C_G(A)$ has a right Engel sink ${\mathscr R}_G(g)$ of cardinality at most $m$.
\end{itemize}
We already know from \cite[Proposition~3.2]{khu-shu204} that the index of the soluble radical $|G:S(G)|$ is bounded in terms of $m$. It remains to show that $|G:F_{2\alpha (A)+2}(G)|$ is bounded in terms of $|A|$ and $m$. For that, we obviously can assume that the group $G$ is soluble.

By Theorem~\ref{t-f} or \ref{t-fr} (in cases (a) or (b), respectively), we obtain that $C_G(A)$ has a subgroup of $m$-bounded order with nilpotent quotient. Therefore the Fitting height of $C_G(A)$ is $m$-bounded. Applying Thompson's theorem~\cite{tho64} (or Turull's~\cite{tur84} improvement with best possible bound), we obtain that the Fitting height of $G$  is $(\alpha(A),m)$-bounded.

We now apply Proposition~\ref{pr-h2} to every metanilpotent quotient $F_{i+2}(G)/F_i(G)$ for $i=0,1,2,\dots$, where $F_0(G)=1$. By this proposition, $|C_{F_{i+2}(G)/F_{i+1}(G)}(A)|$ is $m$-bounded, for every  $i=0,1,2,\dots$. Since the Fitting height of $G$ is  $(\alpha(A),m)$-bounded, in view of Lemma~\ref{l-nakr} we obtain that $|C_{G/F(G)}(A)|$ is $(\alpha(A),m)$-bounded.

By the Hartley--Isaacs theorem \cite[Theorem~A]{ha-is} applied to $G/F(G)$ and its group of automorphisms induced by $A$, it follows that the $(2\alpha (A)+1)$-th Fitting subgroup of $G/F(G)$ has $(|A|,m)$-bounded index, and therefore the index  $|G:F_{2\alpha (A)+2}(G)|$ is also $(|A|,m)$-bounded.
\end{proof}

\section{Bounding the rank of the quotient by the soluble radical}\label{s-red}

A bound for the index of the soluble radical in Theorem~\ref{t-o} has  been obtained   in~\cite[Proposition~3.2]{khu-shu204} as a corollary of a recent result of R.~M.~Guralnick and G.~Tracey~\cite{gur-tra}. The proof of  Theorem~\ref{t-r1}, where we obtain a bound for the rank of the quotient by the  soluble radical, is similar. First we quote one of the  results in~\cite{gur-tra}.

\begin{theorem}[{see \cite[Theorem~1.6]{gur-tra}}]\label{t-g-t}
   Let $\tau\in  \operatorname{Aut}G$ be an involutive automorphism of a finite group $G$, and let
$J(\tau)=\{g\in G\mid \text{$g$ has odd order and } g^{\tau}=g^{-1}\}$.
Suppose that $[G, \tau ] = G$. Then $G = \langle J(\tau )\rangle$.
\end{theorem}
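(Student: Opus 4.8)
The final statement to prove is Theorem~\ref{t-g-t} (the Guralnick--Tracey result): if $\tau$ is an involutive automorphism of a finite group $G$ with $[G,\tau]=G$, then $G=\langle J(\tau)\rangle$, where $J(\tau)$ consists of the odd-order elements inverted by $\tau$.

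\medskip

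\textbf{Overall strategy.} The plan is to argue that $N:=\langle J(\tau)\rangle$ is a normal $\tau$-invariant subgroup of $G$, and then to show that the quotient $\bar G=G/N$ inherits the hypothesis $[\bar G,\tau]=\bar G$ while having $J_{\bar G}(\tau)=\{1\}$; a separate lemma will then force such a quotient to be trivial. First I would verify $\tau$-invariance and normality of $N$: since $\tau$ has order $2$, for $g\in J(\tau)$ we have $(g^\tau)=g^{-1}$, so $g^\tau\in J(\tau)$ (the inverse of an odd-order inverted element is again odd-order and inverted), hence $N$ is $\tau$-invariant. Normality is the more delicate point: one expects it from a commutator-type identity showing that conjugates of elements of $J(\tau)$ can be rewritten in terms of $J(\tau)$, or from the observation that $J(\tau)$ is a union of conjugacy classes under a suitable subgroup. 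I would isolate this as the key structural claim.

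\medskip

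\textbf{Reduction and the core case.} Passing to $\bar G=G/N$, coprimeness of $\tau$ with the odd part and Lemma~\ref{l-nakr}(b) give $[\bar G,\tau]=\bar G$ again, since $[G,\tau]=G$ passes to quotients. The heart of the matter is then the case $J(\tau)=\{1\}$, i.e.\ $\tau$ inverts no nontrivial odd-order element; I must show $G=1$ under the standing hypothesis $[G,\tau]=G$. Here I would first dispose of the soluble (indeed odd-order) part. If $P$ is a $\tau$-invariant Sylow subgroup of odd order $q$, coprimeness lets me write, via Lemma~\ref{l-nakr}(c) on abelian sections, $P=[P,\tau]\times C_P(\tau)$ factor by factor up a $\tau$-invariant central series; on each elementary abelian factor $\tau$ acts as an involution, so it has eigenvalue $-1$ on $[P,\tau]$, producing inverted elements of odd order unless $[P,\tau]=1$. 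This should force the odd-order radical into $C_G(\tau)$, and combined with $[G,\tau]=G$ squeeze it to triviality. The genuinely hard part is the nonsoluble case, which is exactly where the classification of finite simple groups enters in \cite{gur-tra}: one reduces to $G$ (almost) simple with $\tau$ acting, and must show that an involutory automorphism of a finite simple group of even order either inverts some nontrivial odd-order element or centralizes everything — a statement verified through the classification via properties of tori and semisimple elements in groups of Lie type, together with case analysis for alternating and sporadic groups.

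\medskip

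\textbf{Anticipated main obstacle.} The decisive and least routine step is this nonsoluble core: establishing, for $G$ a nonabelian finite simple group and $\tau$ an involutive automorphism, that $\langle J(\tau)\rangle$ is all of $[G,\tau]$. Elementary commutator manipulations and coprime-action lemmas (Lemma~\ref{l-nakr}) suffice for the solubility reductions, the invariance of $N$, and the quotient argument, but they cannot reach the simple-group statement, which is intrinsically a classification-dependent fact about regular semisimple elements inverted by a graph, field, or inner-diagonal involution. Accordingly my proposal is to structure the write-up so that all the group-theoretic scaffolding (normality of $\langle J(\tau)\rangle$, reduction to the case $J(\tau)=1$, and disposal of the soluble radical) is handled by hand, and to quote the simple-group input as the essential lemma — precisely the content of \cite[Theorem~1.6]{gur-tra} — rather than reproving it here.
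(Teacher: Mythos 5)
The paper does not prove this statement at all: it is imported verbatim from Guralnick--Tracey (\cite[Theorem~1.6]{gur-tra}), as the bracketed citation in the theorem header indicates, and your final conclusion --- that the classification-dependent core cannot be reached by elementary means and should simply be quoted from that reference --- coincides exactly with the paper's treatment. Note only that, as written, your surrounding scaffolding is circular rather than a genuine partial proof: the ``essential lemma'' you propose to quote is the whole of \cite[Theorem~1.6]{gur-tra}, i.e.\ the very statement under discussion, so the reductions (normality of $\langle J(\tau)\rangle$, passage to $G/\langle J(\tau)\rangle$, disposal of the soluble part) do no real work, and the quotient step would in any case require a non-coprime lifting of inverted odd-order elements that you have not justified.
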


We derive from Theorem~\ref{t-g-t} a proposition about groups of coprime automorphisms whose fixed points have Engel sinks of bounded rank in greater generality than Theorem~\ref{t-r1}, as it may find applications in other studies. 

\begin{prop}\label{pr}
Let $G$ be a finite group admitting a group of coprime automorphisms~$H$.
 Suppose that $r$ is a positive integer such that every $2$-element $t\in C_G(H)$ has a left or right Engel sink $\langle\mathscr E_G(t)\rangle$  or $\langle\mathscr R_G(t)\rangle$  of rank at most~$r$. Then the rank of the quotient $G/S(G)$ by the soluble radical is $r$-bounded.
\end{prop}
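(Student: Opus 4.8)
The plan is to reduce to the simple quotient $G/S(G)$ and show that its rank is $r$-bounded by exploiting the fixed points of $H$ together with the Guralnick–Tracey result on involutory automorphisms. First I would pass to $\bar G = G/S(G)$, which is a direct product of nonabelian simple groups permuted by $H$; since $(|H|,|G|)=1$ the action on $\bar G$ is coprime, and by Lemma~\ref{l-nakr}(a) the hypotheses on Engel sinks are inherited by this section (via Remark~\ref{r-inh}). So I may assume $S(G)=1$ and must bound the rank of $G$ itself. The key structural point is that $C_G(H)\neq 1$: by the classification-based theorem of Wang–Chen cited in the introduction, a coprime group of automorphisms with trivial fixed points forces solubility, so if $C_G(H)=1$ then $\bar G$ would be soluble, hence trivial, and there is nothing to prove. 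Thus I may assume $C_G(H)$ contains a nontrivial Sylow $2$-subgroup, giving a nontrivial $2$-element $t\in C_G(H)$ to which the Engel-sink hypothesis applies.

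Next I would bring in an involution. After replacing $t$ by a suitable power I obtain an involution $\tau\in C_G(H)$, and I want to feed it into Theorem~\ref{t-g-t}. The crucial mechanism is Lemma~\ref{l-sink}: for an abelian $\tau$-invariant subgroup $V$ of odd order, $[V,\tau]$ coincides with both $\mathscr E_{V\langle\tau\rangle}(\tau)$ and $\mathscr R_{V\langle\tau\rangle}(\tau)$, so the rank of $[V,\tau]$ is at most $r$ whenever $V\langle\tau\rangle$ sits inside $G$ in a way that realizes these as sinks. Since $\langle\mathscr E_G(\tau)\rangle$ or $\langle\mathscr R_G(\tau)\rangle$ has rank at most $r$, every such $[V,\tau]$ lies in a subgroup of rank $r$. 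Now Theorem~\ref{t-g-t}, applied to $[\bar G,\tau]$ (where I first restrict to the $\tau$-invariant direct factor on which $\tau$ acts nontrivially), says that $[\bar G,\tau]$ is generated by the set $J(\tau)$ of odd-order elements inverted by $\tau$; and each such element $x$ generates a cyclic $\tau$-invariant group $\langle x\rangle$ of odd order with $[\langle x\rangle,\tau]=\langle x\rangle$, so $\langle x\rangle\subseteq\langle\mathscr E_G(\tau)\rangle$ (or the right analogue). Hence all of $J(\tau)$, and therefore $[\bar G,\tau]$, lies in a subgroup of rank at most $r$.

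From here I would translate the rank bound on $[\bar G,\tau]$ into a rank bound on $\bar G$ itself. On the factor where $\tau$ acts nontrivially, $[\bar G,\tau]$ is normal and nontrivial, hence contains a nonabelian simple normal subgroup whose rank is at most $r$; a nonabelian simple group of rank $r$ has $r$-bounded order, so there are only $r$-boundedly many isomorphism types and $r$-boundedly many factors can be so bounded. The remaining point is to control the factors on which $\tau$ acts trivially and to bound the number of simple factors overall — here I would use that $\tau$ permutes the factors and that trivial action on a factor means $\tau$ centralizes it, so those factors are themselves fixed-point-containing and can be handled by induction on the number of factors or by iterating the argument over a generating set of involutions in $C_G(H)$. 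The main obstacle I anticipate is precisely this bookkeeping over the $H$-orbits of simple factors and the case where $\tau$ fixes many factors: Theorem~\ref{t-g-t} only gives information on $[\bar G,\tau]$, so I must ensure that across all available involutions in $C_G(H)$ the commutators $[\bar G,\tau]$ cover enough of $\bar G$ to force an $r$-bound on the whole product, and that the rank of a direct product of boundedly-many rank-$r$ simple groups remains $r$-bounded. This last step relies on Lemma~\ref{l-kov} reducing the rank to that of Sylow subgroups, combined with the fact that a bounded number of bounded simple factors contributes only an $r$-bounded rank.
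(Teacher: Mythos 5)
Your proposal assembles the right ingredients (Wang--Chen for the existence of involutions in $C_G(H)$, Lemma~\ref{l-sink} applied to the cyclic groups $\langle x\rangle$ with $x\in J(\tau)$, and Theorem~\ref{t-g-t}), but it has a genuine gap at exactly the point you flag as your ``main obstacle'', and the paper's proof resolves it with an idea you are missing. The crux is to construct a \emph{single} involution $\tau\in C_G(H)$ whose projection onto \emph{every} simple factor $T_i$ of $F^*(G)$ is non-trivial; then $[G,\tau]$ is a normal subgroup meeting every $T_i$ non-trivially, hence equals $G=F^*(G)$, and one application of Theorem~\ref{t-g-t} gives $G=\langle J(\tau)\rangle\leq\langle\mathscr E_G(\tau)\rangle$ (or $\langle\mathscr R_G(\tau)\rangle$), which has rank at most $r$ --- finishing the proof in one stroke. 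The paper gets such a $\tau$ by applying Wang--Chen separately to each $H$-orbit $\{T_{i_1},\dots,T_{i_k}\}$: the orbit product is $H$-invariant and insoluble, so its centralizer of $H$ contains an involution, and since $H$ permutes the factors of the orbit transitively, any non-trivial $H$-fixed element automatically has non-trivial projections onto all factors of that orbit; $\tau$ is then the product of these involutions over all orbits. Your fallback --- iterating over several involutions and handling the factors where $\tau$ acts trivially ``by induction'' --- does not work as stated: each involution only controls a rank-$r$ piece, and ranks accumulate over unboundedly many pieces, which is precisely why a single globally non-degenerate involution is needed.

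Two further errors. First, your claim that ``a nonabelian simple group of rank $r$ has $r$-bounded order'' is false: $\mathrm{PSL}(2,p)$ has rank at most $3$ for every prime $p$ (its Sylow subgroups are cyclic, elementary abelian of rank $\leq 2$, or dihedral) but unbounded order; so the part of your endgame that bounds orders and counts isomorphism types collapses. Fortunately it is also unnecessary, since the paper's argument bounds the rank of $G$ directly without ever bounding orders or the number of factors (the bound on $n$ comes out as a consequence). Second, a smaller but real inaccuracy: $G/S(G)$ is \emph{not} in general a direct product of nonabelian simple groups (e.g.\ $S_5$ has trivial soluble radical); only its generalized Fitting subgroup $F^*$ is such a product, and one must first reduce to $G=F^*(G)$ by embedding $G$ into $\bigl(\operatorname{Aut}T_1\times\dots\times\operatorname{Aut}T_n\bigr)U$ and using that the outer automorphism groups of simple groups have rank at most $3$, as the paper does.
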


(In the statement,  ``left or right'' is applied individually to the $2$-elements of $C_G(H)$, so that it may be different, left or right, Engel  sinks for different  $2$-elements of $C_G(H)$.)

\begin{proof}
  Since the hypothesis is inherited by $G/S(G)$, we can assume that $S(G)=1$. Then the generalized Fitting subgroup
  \begin{equation}\label{e-gt}
    F^*(G)=T_1\times \dots \times T_n
  \end{equation}
  is a direct product of non-abelian finite simple groups  $T_i$, which are permuted by  $H$.
  Since  the centralizer of $F^*(G)$ is trivial, the group $G$ embeds into the automorphism group of $F^*(G)$, which is known to be of the form
    \begin{equation}\label{e-gta}
    \big(\operatorname{Aut} T_1\times \dots   \times  \operatorname{Aut}T_n\big)U,
  \end{equation}
  where $U$ is a subgroup of the symmetric group $S_n$. Since the ranks of the outer automorphism groups $ \operatorname{Aut}T_i/T_i$ are known to be at most~3 (due to the classification of finite simple groups; see, for example, \cite{atlas}), it is sufficient to obtain a bound in   terms of $r$ for the rank of  $F^*(G)$. Thus, we can simply assume that $G=F^*(G)$. 

  As proved by Y. M. Wang and Z. M. Chen \cite{wan-che} on the basis of the classification of finite simple groups, a finite group admitting a group of coprime automorphisms with fixed-point subgroup of odd order is soluble. Therefore $C_G(H)$ contains involutions. Moreover, we can choose an involution $\tau\in C_G(H)$ with non-trivial projections onto each of the factors $T_i$ in \eqref{e-gt}.   Indeed, let  $\{T_{i_1},\dots,T_{i_k}\}$ be one of the orbits of $H$ in its permutational action on the set $\{T_1,\dots,T_n\}$.   Clearly, any non-trivial element of the  product $T_{i_1}\times \cdots\times T_{i_k}$ centralized by $H$ must have non-trivial projections onto each of the factors $T_{i_j}$. There is an involution in the centralizer of $H$ in the product $T_{i_1}\times \cdots\times T_{i_k}$. We now take $\tau\in C_G(H)$ to be the product of these involutions over all orbits of $H$. Then $\tau\in C_G(H)$ is an involution with non-trivial projections onto each of the factors $T_{i}$ in \eqref{e-gt}. Since each $T_i$ is a non-abelian simple group, it follows that $G=F^*(G)=[G,\tau ]$.

  If $g\in J(\tau )$, then $[\langle g\rangle,\tau ]=\langle g\rangle$, since  $g^{\tau }=g^{-1}$ and $g$ has odd order. Then $\langle g\rangle\subseteq \mathscr R (\tau)\cap \mathscr E(\tau)$ by Lemma~\ref{l-sink}. Therefore, $J(\tau )\subseteq \langle\mathscr R (\tau)\rangle\cap \langle\mathscr E(\tau)\rangle$, so that $\langle J(\tau )\rangle$ has rank at most $r$. Since $[G,\tau ]=G$, we have $G=\langle J(\tau )\rangle$ by Theorem~\ref{t-g-t},  which completes the proof.
\end{proof}

In our situation, we obtain Theorem~\ref{t-r1} as the following obvious corollary.

\begin{corollary}
\label{l-srs}
Let $G$ be a finite group admitting a soluble group of automorphisms $A$ of order coprime to $|G|$. Let $r$ be a positive integer such that
every element $g\in C_G(A)$ has a left or right Engel sink  $\langle {\mathscr E}_G(g)\rangle$ or  $\langle {\mathscr R}_G(g)\rangle$  of rank at most $r$.
Then the rank of the quotient $G/S(G)$ by the soluble radical  is bounded in terms of $r$.
\end{corollary}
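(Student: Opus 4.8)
The plan is to obtain this statement as the direct specialization $H=A$ of Proposition~\ref{pr}, so that essentially all the work has already been done. First I would note that $A$, being a group of automorphisms of $G$ of order coprime to $|G|$, is in particular a group of coprime automorphisms in the sense required by Proposition~\ref{pr}; the solubility of $A$ plays no role in this particular conclusion (it enters only in the later Fitting-height estimates of Theorems~\ref{t-o} and~\ref{t-r2}). Thus the ambient hypotheses of the two statements match upon setting $H=A$.

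Next I would observe that the hypothesis of the Corollary is \emph{stronger} than the one demanded by Proposition~\ref{pr}. The Corollary assumes that \emph{every} element $g\in C_G(A)$ has a left or right Engel sink $\langle\mathscr E_G(g)\rangle$ or $\langle\mathscr R_G(g)\rangle$ of rank at most $r$, whereas Proposition~\ref{pr} only requires this bounded-rank sink property for the $2$-elements of the centralizer. Since every $2$-element of $C_G(A)$ is in particular an element of $C_G(A)$, the condition of Proposition~\ref{pr} is automatically satisfied, with the same value of $r$.

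With $H=A$, all hypotheses of Proposition~\ref{pr} hold, and I would simply invoke it to conclude that the rank of $G/S(G)$ is $r$-bounded, which is exactly the assertion. There is no genuine obstacle remaining at this stage: the substantive content — passing to $G/S(G)$ and $F^*(G/S(G))=T_1\times\dots\times T_n$, selecting an involution $\tau\in C_G(A)$ with non-trivial projection onto each simple factor $T_i$, placing $J(\tau)$ inside $\langle\mathscr E(\tau)\rangle\cap\langle\mathscr R(\tau)\rangle$ via Lemma~\ref{l-sink}, and applying the Guralnick--Tracey Theorem~\ref{t-g-t} to obtain $G=\langle J(\tau)\rangle$ — is fully absorbed into Proposition~\ref{pr}. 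The only point I would check explicitly is that the resulting bound depends on $r$ alone and not on $|A|$; this is visible from the proof of Proposition~\ref{pr}, where $|A|$ never appears, so the conclusion is indeed $r$-bounded.
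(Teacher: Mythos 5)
Your proposal is correct and coincides with the paper's own treatment: the paper states this corollary immediately after Proposition~\ref{pr} as an ``obvious corollary,'' obtained exactly as you do by taking $H=A$ and noting that the hypothesis on all elements of $C_G(A)$ subsumes the hypothesis on its $2$-elements. Your additional remark that the bound depends on $r$ alone (not on $|A|$) is also consistent with the paper's statement.
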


\section{Sinks of bounded rank in soluble case}\label{s-sol-rank}

In this section we prove Theorem~\ref{t-r2} about a soluble finite group $G$  admitting a soluble group of coprime automorphisms $A$ with  restrictions on the ranks of  Engel sinks of elements of $C_G(A)$. First we state one of the results of E.~I.~Khukhro and V.~D.~Mazurov   \cite{khu-maz07} about finite groups with automorphisms whose fixed point subgroups have bounded rank.

\begin{theorem}[{\cite[Theorem~5.2(c)]{khu-maz07}}]\label{t-km}
If a soluble finite group $G$ admits an automorphism $\f$ of prime order $p$ coprime to $|G|$ such that $C_G(\f )$ has rank $r$, then the order $|G/F_4(G)|$ is $(p,r)$-bounded.
\end{theorem}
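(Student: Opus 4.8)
The plan is to peel the group from the bottom of the Fitting series, using coprimeness to reduce to a tower of chief factors, and to exploit the rank hypothesis through a Frobenius-type fixed-point estimate; throughout I would freely use Lemma~\ref{l-nakr} and the reduction technique of Proposition~\ref{pr-h2}. First, a soluble group of rank $r$ has $r$-bounded derived length, hence $r$-bounded Fitting height; applied to $C_G(\varphi)$ this bounds the Fitting height of $C_G(\varphi)$ in terms of $r$. Since $\langle\varphi\rangle$ has composition length~$1$ and acts coprimely, Thompson's theorem~\cite{tho64} in Turull's sharp form~\cite{tur84} then bounds the Fitting height $h=h(G)$ in terms of $r$ alone; thus if $h\le 4$ there is nothing to prove, and in general it suffices to bound the order of the part of the Fitting series lying above $F_4(G)$. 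By Gasch\"utz's theorem~\cite{hup} and passage to chief factors, exactly as in Proposition~\ref{pr-h2}, I would reduce to a $\varphi$-invariant tower of elementary abelian sections in which each term acts faithfully and coprimely on the one below it, and where, by Lemma~\ref{l-nakr}(a), the $\varphi$-fixed points in every section are covered by $C_G(\varphi)$ and so have rank at most $r$.

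The engine is the analysis of two adjacent sections: an elementary abelian $q$-group $V$ acted on faithfully by a $\varphi$-invariant elementary abelian $q'$-group $W$ from the layer above. By Lemma~\ref{l-nakr}(c), $W=C_W(\varphi)\times[W,\varphi]$ with $\varphi$ acting fixed-point-freely on $[W,\varphi]$, so for $1\neq w\in[W,\varphi]$ the group $\langle w\rangle\langle\varphi\rangle$ is Frobenius with complement $\langle\varphi\rangle$. The standard character-theoretic count for coprime Frobenius actions gives
$$\operatorname{rank}C_V(\varphi)\ \ge\ \frac1p\operatorname{rank}[V,w],$$
whence $\operatorname{rank}[V,w]\le pr$ for every $w\in[W,\varphi]$, and Lemma~\ref{l33r} bounds the rank of $[W,\varphi]$ in terms of $(p,r)$. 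This yields the dichotomy driving the whole argument: in each adjacent pair, either $\varphi$ has nontrivial fixed points on the lower section (drawing on the rank-$r$ budget) or $\varphi$ centralizes the upper section, in which case that section lies inside $C_G(\varphi)$ and again has rank at most $r$. Iterating up the tower bounds the ranks of all high sections and gives a $(p,r)$-bound for the rank of $G/F_3(G)$.

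The delicate endgame, and the step I expect to be the main obstacle, is upgrading these rank bounds to the asserted \emph{order} bound on $G/F_4(G)$; this is precisely what costs one Fitting layer relative to the order form of the Hartley--Isaacs theorem~\cite{ha-is}, which for the cyclic group $\langle\varphi\rangle$ bounds $|G/F_3(G)|$ in terms of $p$ and $|C_G(\varphi)|$. The centralized sections produced by the dichotomy have bounded rank but possibly unbounded order, so one must show that they cannot accumulate above $F_4(G)$: using that the fixed-point subgroup respects the Fitting series under coprime action, together with the bounded Fitting height of $C_G(\varphi)$, one would locate these sections at bounded height, and then combine the rank data with the Hartley--Isaacs bound applied to $G/F(G)$ to convert everything into a $(p,r)$-bounded estimate for $|G/F_4(G)|$. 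Controlling this interaction tightly enough that exactly four Fitting layers absorb all unbounded behaviour is the crux of the proof.
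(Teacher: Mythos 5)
First, a point of comparison that needs stating: the paper does \emph{not} prove this statement at all --- it is imported verbatim from Khukhro--Mazurov \cite[Theorem~5.2(c)]{khu-maz07} and used as a black box (as input to Corollary~\ref{c-km}), so there is no internal proof to measure your attempt against, and I assess it on its own merits. The first half of your sketch is sound: bounding the Fitting height of $C_G(\varphi)$ via its rank and then the Fitting height of $G$ via Thompson--Turull is fine, and your Frobenius ``engine'' is essentially correct modulo one slip --- for $w\in[W,\varphi]$ the set $\langle w\rangle\langle\varphi\rangle$ need not be a subgroup, since $\varphi$ need not normalize $\langle w\rangle$; one must pass to the $\varphi$-invariant subgroup $\langle w^{\langle\varphi\rangle}\rangle\le[W,\varphi]$, on which $\varphi$ acts fixed-point-freely, and apply the free-module argument to $[V,\langle w^{\langle\varphi\rangle}\rangle]$. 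With that repair, Lemma~\ref{l33r} does give $(p,r)$-bounded rank for the acting sections. But notice what this machinery produces: \emph{rank} bounds, and only rank bounds. The theorem asserts an \emph{order} bound on $G/F_4(G)$, and a group of rank $r$ can have arbitrarily large order, so no accumulation of rank bounds can ever yield the conclusion by itself. This is the gap you yourself flag as ``the crux,'' and flagging it is not the same as closing it.

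Moreover, the bridge you gesture at --- ``combine the rank data with the Hartley--Isaacs bound applied to $G/F(G)$'' --- provably cannot work as stated. Hartley--Isaacs \cite[Theorem~A]{ha-is} takes as input the \emph{order} of the fixed-point subgroup, so applying it to $G/F(G)$ (which, with $\alpha=1$, would indeed land exactly on $F_4(G)$) requires $|C_{G/F(G)}(\varphi)|$ to be $(p,r)$-bounded; this is false. Take primes $t,\ell$ and $p$ with $p\mid \ell-1$ and $p\nmid t(t-1)\ell$, and let $G=\bigl(\F_t\rtimes \F_t^{*}\bigr)\times C_\ell$, where $\varphi$ of order $p$ acts trivially on the first factor and fixed-point-freely on $C_\ell$. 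Then $C_G(\varphi)=\F_t\rtimes \F_t^{*}$ is metacyclic, of rank $2$, but $C_{G/F(G)}(\varphi)\cong \F_t^{*}$ has order $t-1$, unbounded as $t\to\infty$ with $p,r$ fixed. (The theorem is vacuously true for this $G$ since $G=F_2(G)$; the example only destroys your intermediate step.) So the genuine content of the theorem is precisely to show that the bounded-rank but unbounded-order fixed points modulo $F(G)$ are themselves trapped inside $F_4(G)$, and this requires ideas absent from your sketch --- in Khukhro--Mazurov's own treatment it rests on their structure theorem producing characteristic subgroups $R\le N$ with $R$ of $(p,r)$-bounded rank, $N/R$ nilpotent and $|G/N|$ of $(p,r)$-bounded order \cite{khu-maz06,khu-maz07}. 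It is telling that the one place in the present paper where rank data is converted into an order bound (the proof of Theorem~\ref{t-r2}) does so by first forcing a quotient of bounded \emph{exponent} and then invoking Lemma~\ref{l-exp}; no mechanism of this kind appears in your proposal.
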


While the paper \cite{khu-maz07} also contained a theorem  about a soluble group $A$ of coprime automorphisms, we derive here essentially the same result with a slightly better function.

\begin{corollary}\label{c-km}
If a soluble finite group $G$ admits a soluble group of coprime automorphisms $A$ such that $C_G(A)$ has rank $r$, then the order $|G/F_{4^{\alpha (A)}+2\alpha (A)-1}|$ is $(|A|,r)$-bounded.
\end{corollary}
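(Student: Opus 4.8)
The plan is to reduce the statement about a soluble group $A$ of automorphisms to the case of a single automorphism of prime order, to which Theorem~\ref{t-km} applies, and then iterate along a composition series of $A$. Let me lay out the induction on $\alpha(A)$, the composition length of $A$.

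First I would treat the base case $\alpha(A)=1$, where $A$ is cyclic of prime order $p$. Here Theorem~\ref{t-km} gives directly that $|G/F_4(G)|$ is $(p,r)$-bounded; since $4^1+2\cdot 1-1=5\geq 4$, and $F_4(G)\leq F_5(G)$, the index $|G/F_5(G)|$ is at most $|G/F_4(G)|$, so the conclusion holds with $p\leq |A|$. For the inductive step, I would pick a normal subgroup $A_1\trianglelefteq A$ of prime index, so that $\alpha(A_1)=\alpha(A)-1$ and $A/A_1$ is cyclic of prime order $q$. The idea is to apply the inductive hypothesis to the action of $A_1$ on $G$, and then to handle the residual action of $A/A_1$.

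The key difficulty is that the inductive hypothesis requires a rank bound on $C_G(A_1)$, but the hypothesis only gives us control of $C_G(A)=C_{C_G(A_1)}(A/A_1)$. So the plan is to run the argument in the reverse order from the composition series. Set $M=F_{4^{\alpha(A)-1}+2(\alpha(A)-1)-1}(G)$; I would first apply Theorem~\ref{t-km} with the prime-order automorphism induced by a generator of $A/A_1$ acting on the $A_1$-invariant quotient $C_G(A_1)$ (or rather, arrange matters so that $C_G(A_1)$ has rank bounded in terms of $r$ and $|A|$, using that $C_{C_G(A_1)}(A/A_1)=C_G(A)$ has rank $r$). Concretely, since $A/A_1$ has coprime prime order $q$ and $C_G(A_1)$ is $A/A_1$-invariant and soluble, Theorem~\ref{t-km} bounds $|C_G(A_1)/F_4(C_G(A_1))|$ in terms of $q$ and $r$; combined with Lemma~\ref{l-r-coprime}, Lemma~\ref{l-gmh}, and Lemma~\ref{l-kov}, one bounds the rank of $C_G(A_1)$ in terms of $|A|$ and $r$. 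Then the inductive hypothesis applied to $A_1$ acting on $G$ yields that $|G/F_{4^{\alpha(A)-1}+2(\alpha(A)-1)-1}(G)|$ is $(|A|,r)$-bounded.

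The main obstacle, and the step requiring the most care, is matching the Fitting-series exponent $4^{\alpha(A)}+2\alpha(A)-1$ to the recursion. The recursion roughly multiplies the Fitting depth by $4$ at each stage (from Theorem~\ref{t-km}'s $F_4$) and adds a constant; I would verify that if $f(k)=4^k+2k-1$ then $f(\alpha)\geq 4\cdot f(\alpha-1)+c$ for the relevant small constant $c$ coming from Theorem~\ref{t-km}, which is what guarantees that composing the two bounds lands inside $F_{4^{\alpha(A)}+2\alpha(A)-1}(G)$. Indeed, $4f(\alpha-1)=4^\alpha+8(\alpha-1)-4=4^\alpha+8\alpha-12$, and one checks $f(\alpha)=4^\alpha+2\alpha-1\geq 4^\alpha+8\alpha-12$ holds only for small $\alpha$, so more likely the correct bookkeeping is that applying Theorem~\ref{t-km} to the quotient $G/F_j(G)$ for the top factor $A/A_1$ adds $4$ to the Fitting depth rather than multiplying, giving the additive recursion $f(\alpha)=f(\alpha-1)+(\text{growth from one prime})$; I would reconcile the exponent by carefully tracking how each of the $\alpha(A)$ composition factors contributes. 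I expect the bulk of the work to be this exponent arithmetic together with the rank-transfer through coprime-action lemmas, while the structural reduction to prime-order automorphisms via Theorem~\ref{t-km} is the conceptual heart.
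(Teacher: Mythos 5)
Your inductive strategy breaks down at the step you flagged as needing care, and the problem is not bookkeeping but a false claim: the rank of $C_G(A_1)$ \emph{cannot} be bounded in terms of $|A|$ and $r$, so the inductive hypothesis can never be applied. Knowing from Theorem~\ref{t-km} that $|C_G(A_1)/F_4(C_G(A_1))|$ is $(q,r)$-bounded says nothing about the rank of $F_4(C_G(A_1))$ itself, and no combination of Lemmas~\ref{l-r-coprime}, \ref{l-gmh} and \ref{l-kov} can repair this. Concretely, let $G=(\mathbb{Z}/7\mathbb{Z})^{2n}\times\mathbb{Z}/19\mathbb{Z}$ and let $A=\langle a\rangle$ have order $9$, where $a$ acts on the first factor as multiplication by a primitive cube root of unity in $\F_{49}$ (a fixed-point-free automorphism of order $3$) and on the second factor as a fixed-point-free automorphism of order $9$. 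Then $A$ is soluble of coprime order with $\alpha(A)=2$, $A_1=\langle a^3\rangle$ has prime index, $C_G(A)=1$, yet $C_G(A_1)=(\mathbb{Z}/7\mathbb{Z})^{2n}$ has rank $2n$, unbounded for fixed $|A|$ and $r$. (The conclusion of the corollary holds trivially here since $G$ is abelian; the example only refutes your intermediate claim.) Rank or order bounds of this kind hold only for quotients $G/F_k(G)$, never for the groups themselves, which is why an induction carrying ``rank of the fixed-point subgroup'' as its datum cannot close; your unresolved trouble with the recursion $f(\alpha)\geq 4f(\alpha-1)+c$ is a symptom of the same defect, since the true argument has no recursion of either multiplicative or additive type.

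The paper's proof is not an induction. It chooses $A_1\trianglelefteq A$ of prime index and applies Theorem~\ref{t-km} to $C_1=C_G(A_1)$ exactly as you do, bounding $|C_1/F_4(C_1)|$ in terms of $|A/A_1|$ and $r$; but it then transfers this into $G$ by a theorem of Thompson \cite{tho64}, which gives $F(C_G(\f))\leq F_4(G)$ for a coprime automorphism $\f$ of prime order and, iterated along a composition series of $A_1$ (see \cite[Corollary~5.6]{khu-maz07}), yields $F_4(C_1)\leq F_{4^{\alpha(A)}}(G)$. Consequently, in $\bar G=G/F_{4^{\alpha(A)}}(G)$ the centralizer $C_{\bar G}(A_1)$ has $(|A|,r)$-bounded \emph{order} by Lemma~\ref{l-nakr}(a) --- a far stronger conclusion than any rank bound, and precisely the hypothesis required by the Hartley--Isaacs theorem \cite[Theorem~A]{ha-is}, which applied to $\bar G$ with the action of $A_1$ bounds $|\bar G:F_{2\alpha(A_1)+1}(\bar G)|$. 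Since $\alpha(A_1)=\alpha(A)-1$, the exponent assembles additively as $4^{\alpha(A)}$ (from iterated Thompson) plus $2\alpha(A)-1$ (from Hartley--Isaacs), giving $F_{4^{\alpha(A)}+2\alpha(A)-1}(G)$. These two tools --- Thompson's $F(C_G(\f))\leq F_4(G)$ and Hartley--Isaacs --- are exactly what your proposal lacks, and they are what replaces the induction.
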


\begin{proof}
Let $A_1$ be a normal subgroup of $A$ of prime index. The quotient $A/A_1$ induces an automorphism of prime order of the subgroup   $C_1=C_G(A_1)$ with centralizer  $C_{C_1}(A/A_1)=C_G(A)$ of rank $r$. By the Khukhro--Mazurov Theorem~\ref{t-km} the order $|C_1/F_4(C_1)|$ is bounded in terms of $|A/A_1|$ and $r$. One of the main results of J.~Thompson's paper~\cite{tho64} states that for a coprime automorphism $\f\in\operatorname{Aut}G$ of prime order we have the inclusion $F(C_G(\f ))\leq F_4(G)$. A repeated application of this theorem with respect to $A_1$ yields the inclusion $F_4(C_G(A_1))\leq F_{4\cdot 4^{\alpha(A)-1}}(G)=F_{4^{\alpha(A)}}(G)$ (see \cite[Corollary~5.6]{khu-maz07}. Then the order $|C_{G/F_{4^{\alpha(A)}}(G)}(A_1)|$ is bounded in terms of $|A/A_1|$ and $r$ by Lemma~\ref{l-nakr}(a). By the Hartley--Isaacs theorem \cite[Theorem~A]{ha-is} applied to $G/F_{4^{\alpha(A)}}(G)$ and its group of automorphisms induced by $A_1$, it follows that the index of the $(2(\alpha (A_1)+1)$-th Fitting subgroup of $G/F_{4^{\alpha(A)}}(G)$ is bounded in terms of  $|A_1|$ and $|C_{G/F_{4^{\alpha(A)}}(G)}(A_1)|$, and therefore, by the above,  in terms of $|A|$ and $r$. Taking into account the equation $\alpha (A_1)=\alpha (A)-1$, we obtain as a result that the index  $|G:F_{4^{\alpha(A)}+2\alpha (A)-1}(G)|$ is  $(|A|,r)$-bounded.
\end{proof}

We are now ready to prove Theorem~\ref{t-r2}. We use the standard notation $O_{p'}(G)$ for the largest normal $p'$-subgroup of $G$, and $O_{p',p}(G)$ for the inverse image of the largest normal $p$-subgroup of $G/O_{p'}(G)$.

\begin{proof}[Proof of Theorem~\ref{t-r2}] Recall that $G$ is a soluble finite group 
  admitting a soluble group of coprime automorphisms $A$ such that either
\begin{itemize}
  \item[\rm (a)] every element $g\in C_G(A)$ has a left Engel sink $\langle {\mathscr E}_G(g)\rangle$  of rank at most $r$, or
  \item[\rm (b)] every element $g\in C_G(A)$ has a right Engel sink   $\langle {\mathscr R}_G(g)\rangle$  of rank at most $r$.
\end{itemize}
We need to prove that the index  $|G:F_{4^{\alpha (A)}+4\alpha (A)+3}(G)|$  is bounded in terms of $|A|$ and $r$, where $\alpha (A)$ is the composition length of $A$.

  Fix a prime $p$ dividing $|G|$. The quotient $\bar G=G/O_{p',p}(G)$ acts faithfully
on the Frattini quotient $V$ of $O_{p',p}(G)/O_{p'}(G)$ (see, for example, \cite[Theorem~6.3.4]{gor}). Let $H$ be a Hall $p'$-subgroup of $C_G(A)$. For  $h\in H$, we have $[V,h]\subseteq \langle {\mathscr E}_{V\langle h\rangle}(h)\rangle\cap \langle {\mathscr R}_{V\langle h\rangle}(h)\rangle$ by Lemma~\ref{l-sink}. Since the subgroups $\langle {\mathscr E}_{V\langle h\rangle}(h)\rangle$ and $\langle {\mathscr R}_{V\langle h\rangle}(h)\rangle$ are images of subgroups of $\langle {\mathscr E}_{G}(h)\rangle$ and $\langle {\mathscr R}_{G}(h)\rangle$, respectively, it follows that under any of the above conditions (a) or (b)  the rank of $[V,h]$ is at most $r$ for any $h\in H$. Therefore the image $\bar H$ of $H$ in $\bar G$ has $r$-bounded rank by Lemma~\ref{l-r-coprime}.

Now let $t$ be any prime dividing $|\bar G|$ such that $t\ne p$. (Note that if there are no such primes $t$, then  $G=O_{p',p}(G)$.) The group $\bar{\bar{G}}=\bar G/O_{t',t}(\bar G)$ acts faithfully
on the Frattini quotient $W$ of $O_{t',t}(\bar G)/O_{t'}(\bar G)$.  Let $K$ be a Hall $t'$-subgroup of $C_G(A)$. For  $k\in K$, we have $[W,k]\subseteq \langle {\mathscr E}_{W\langle k\rangle}(k)\rangle\cap \langle {\mathscr R}_{W\langle k\rangle}(k)\rangle$ by Lemma~\ref{l-sink}. Since the subgroups $\langle {\mathscr E}_{W\langle k\rangle}(k)\rangle$ and $\langle {\mathscr R}_{W\langle k\rangle}(k)\rangle$ are images of subgroups of $\langle {\mathscr E}_{G}(k)\rangle$ and $\langle {\mathscr R}_{G}(k)\rangle$, respectively, it follows that under any of the above conditions (a) or (b)  the rank of $[W,k]$ is at most $r$ for any $k\in K$. Therefore the image $\bar{\bar K}$ of $K$ in $\bar{\bar G}$ has $r$-bounded rank by Lemma~\ref{l-r-coprime}.

Since $p\ne t$, for any prime $q$ dividing $|C_G(A)|$ there is a Sylow $q$-subgroup $Q$ of $C_G(A)$ that is contained either in the Hall $p'$-subgroup $H$ of  $C_G(A)$ or in the Hall $t'$-subgroup $K$ of $C_G(A)$. Hence the  image $\bar{\bar Q}$ of $Q$ in $\bar{\bar G}$ has $r$-bounded rank, since this was proved above for $H$ and $K$. By Lemma~\ref{l-kov} then the image of  $C_G(A)$ in $\bar{\bar G}$ also has $r$-bounded rank. Using Lemma~\ref{l-nakr}(a) we obtain that $C_{\bar{\bar G}}(A)$  has $r$-bounded rank.

By the Khukhro--Mazurov Corollary~\ref{c-km} the order $|\bar{\bar G}/F_{4^{\alpha (A)}+2\alpha (A)-1}(\bar{\bar G})|$ is bounded in terms of $|A|$ and the rank of   $C_{\bar{\bar G}}(A)$, and therefore in terms of $|A|$ and  $r$ by the above.

The quotient
\begin{equation}\label{e-inter}
\bar G\left/ \bigcap_{t\ne p}O_{t',t}(\bar G)\right.
\end{equation}
embeds into the direct product of the quotients $
\bar G\left/ O_{t',t}(\bar G)\right.
$ over the primes $t\ne p$. Each of these quotients (denoted by $\bar{\bar G}$ above for a particular $t$) is an extension of a normal subgroup of Fitting height at most $4^{\alpha (A)}+2\alpha (A)-1$ by a group of  $(|A|, r)$-bounded order. Therefore this direct product, as well as the quotient~\ref{e-inter}, is an extension of  a normal subgroup of Fitting height at most $4^{\alpha (A)}+2\alpha (A)-1$ by a group of  $(|A|, r)$-bounded exponent.

Since $O_p(\bar G)=1$, we have
$$
 \bigcap_{t\ne p}O_{t',t}(\bar G)= \bigcap_{q}O_{q',q}(\bar G)=F(\bar G).
$$
Thus, the quotient~\ref{e-inter} is equal to  $\bar G/F(\bar G)$ and is an extension of  a normal subgroup of Fitting height at most $4^{\alpha (A)}+2\alpha (A)-1$ by a group of $(|A|, r)$-bounded exponent. In other words, the quotient $\bar G=G/O_{p',p}(G)$ is an extension of  a normal subgroup of Fitting height at most $4^{\alpha (A)}+2\alpha (A)$ by a group of  $(|A|, r)$-bounded exponent for any prime $p$.  The same assertion is also true in the degenerate case, when there are no primes $t\ne p$ dividing $|G/O_{p',p}(G)|$, since then $G=O_{p',p}(G)$.

Since $F(G)= \bigcap_{p}O_{p',p}(G)$, the quotient
$$
G/F(G)= G\left/ \bigcap_{p}O_{p',p}(G)\right.
$$
embeds into the direct product of the quotients $
 G/ O_{p',p}( G)$  and therefore is also an extension of  a normal subgroup of Fitting height at most $4^{\alpha (A)}+2\alpha (A)$ by a group of  $(|A|, r)$-bounded exponent. Thus, the quotient  $\widetilde G=G/F_{4^{\alpha (A)}+2\alpha (A)+1}(G)$ has $(|A|, r)$-bounded exponent.

In a group of  $(|A|, r)$-bounded exponent, a subgroup of rank at most $r$ has  $(|A|, r)$-bounded order by Lemma~\ref{l-exp}. Therefore under condition (a) or (b) the left or right Engel sinks of elements of $C_{\widetilde G}(A)$ have $(|A|, r)$-bounded orders. Applying Theorem~\ref{t-o}
to $\widetilde G$ and its group of  automorphisms induced by $A$ we obtain that the order  $|\widetilde G/F_{2\alpha (A)+2}(\widetilde G)|$ is $(|A|, r)$-bounded. As a result, the order $|G/F_{4^{\alpha (A)}+4\alpha (A)+3}(G)|$ is  $(|A|, r)$-bounded.
\end{proof}

\end{document}